\newcommand*{\SET}[1]  {\ensuremath{\mathbb{#1}}}
\newcommand{\R}{\SET{R}}
\newcommand{\E}{\SET{E}}
\newcommand{\N}{\SET{N}}
\newcommand{\conv}{\operatorname{conv}}
\newcommand{\Jac}{\operatorname{Jac}}
\DeclareMathOperator{\di}{d\!}
\newcommand{\Ran}{\R_{\operatorname{an}}}
\newcommand{\Ranexp}{\R_{\operatorname{an,exp}}}
\newcommand{\diff}{\operatorname{diff}}
\newtheorem{theorem}{Theorem}[section]
\newtheorem{lemma}{Lemma}[section]
\newtheorem{proposition}{Proposition}[section]
\newtheorem{corollary}{Corollary}[section]
\newtheorem{definition}{Definition}[section]
\newtheorem{remark}{Remark}[section]
\newtheorem{assumption}{Assumption}[section]
\newenvironment{proof}[1][]{\noindent {\bf Proof#1:\;}}{\hfill $\Box$}
\providecommand{\keywords}[1]{\textbf{\textbf{Keywords. }} #1}
\providecommand{\ams}[1]{\textbf{\textbf{AMS subject classifications. }} #1}
\begin{document}

\title{Nonsmooth nonconvex stochastic heavy ball}

\author{Tam Le \thanks{Toulouse School of Economics, Universit\'e de Toulouse, ANITI.}}
\maketitle


\maketitle

\begin{abstract}
    Motivated by the conspicuous use of momentum-based algorithms in deep learning, we study a nonsmooth nonconvex stochastic heavy ball method and show its convergence. Our approach builds upon semialgebraic (definable) assumptions commonly met in practical situations and combines a nonsmooth calculus with a differential inclusion method. Additionally, we provide general conditions for the sample distribution to ensure the convergence of the objective function. Our results are general enough to justify the use of subgradient sampling in modern implementations that heuristically apply rules of differential calculus on nonsmooth functions, such as backpropagation or implicit differentiation. As for the stochastic subgradient method, our analysis highlights that subgradient sampling can make the stochastic heavy ball method converge to artificial critical points. Thanks to the semialgebraic setting, we address this concern showing that these artifacts are almost surely avoided when initializations are randomized, leading the method to converge to Clarke critical points.
\end{abstract}

\keywords{heavy ball, stochastic gradient, deep learning, nonsmooth optimization, o-minimal structure}

\ams{49J53, 68Q25, 68W27, 49J52, 28B20}


\section{Introduction}
In order to deal with a stochastic minimization problem, we are interested in a variant of the gradient method which includes a momentum term, called the heavy ball method \cite{POLYAK19641}. In the case of a stochastic minimization problem involving an intractable expectation,  this method may be refined using gradient sampling instead of a deterministic gradient. This stochastic version of the method and more generally momentum-based algorithms with gradient sampling are widely used in deep learning due to their empirical effectiveness \cite{krizhevsky,pmlr-v28-sutskever13,Kingma2014AdamAM}. Gradient sampling is usually justified by the interchanging of the differentiation and the expectation operators, i.e., the expectation of the gradient is the gradient of the expectation. In this case, many convergence results exist for the stochastic heavy ball method, in the convex setting  \cite{Yang2016UnifiedCA,pmlr-v115-orvieto20a,pmlr-v134-sebbouh21a}, and in the nonconvex setting \cite{gadat2018stochastic} via the ODE approach \cite{benaim1999dynamics}.

In many situations in deep learning, the objective function is nonconvex and nonsmooth. In this context, the differential inclusion approaches developed in \cite{benaim,bianchi2021closed} have been used to study first-order methods in the nonsmooth and nonconvex setting \cite{Davis2020,Majewski2018AnalysisON,bolte2022subgradient,castera2021inertial,Bolte2020LongTD}. Since they rely on a set-valued ODE driven by a convex-valued map having a closed graph, the Clarke subdifferential \cite{clarke1990optimization} naturally arises in the choice of a nonsmooth first-order oracle. For a locally Lipschitz integrand, one may define the Clarke subdifferential \cite{clarke1990optimization} and consider a nonsmooth version of the stochastic heavy ball method.

In order to justify the convergence of such a nonsmooth version to Clarke critical points, a main point to address is the interchanging of expectation and Clarke subdifferential. Unfortunately, Clarke subdifferential theory is incompatible with this calculus rule, and sampling Clarke subgradients approximates an expected subgradient which may not be a subgradient of the expectation. Furthermore, oracles used in practical implementations may not be equal to Clarke subgradients. For instance, automatic differentiation, or backpropagation, used in deep learning \cite{lecun2015deeplearning}, may not be equal to a Clarke subgradient \cite{bolte2020mathematical}. For this reason, a recent model of generalized gradient called conservative gradient has been introduced in \cite{bolte2019conservative}. These objects are closed graph set-valued maps that ensure a descent mechanism along curves and that extend the calculus rules to nonsmooth functions. Usual differentiation rules apply to these gradient objects: for instance, composing conservative Jacobians gives a conservative Jacobian, and the sum of conservative gradients gives a conservative gradient for the sum. In particular, such a conservative gradient models the backpropagation oracle \cite{bolte2020mathematical} which is given by the chain rule on Clarke Jacobian, and also other implementations such as implicit differentiation \cite{bolte2021nonsmooth} which has many applications in machine learning like implicit neural networks \cite{deepeqmodels},  hyperparameter optimization \cite{pmlr-v119-bertrand20a} or latent variable models \cite{figurnovimplicit}.

Following this model, it is shown in \cite[Theorem 3.10]{bolte2022subgradient} that the expectation of a conservative gradient is a conservative gradient for the expectation. This fundamental property justifies subgradient sampling in practical implementations. Consequently,  it is proved in  \cite{bolte2022subgradient} that the stochastic subgradient method converges to generalized critical points, defined as the zero locus of a conservative gradient. Due to the construction of the conservative gradients, their values may differ from the Clarke subdifferential. Such a convergence result is then not satisfactory and one may converge to artificial critical points having absurd locations. Following ideas from \cite{bianchi2020,bolte2020mathematical}, it is shown in \cite{bolte2022subgradient} that for the stochastic subgradient method and a semialgebraic integrand, artifacts coming from the conservative calculus can be avoided with a randomized initialization. This result furthermore holds for a more general class of functions called \emph{definable in an o-minimal structure} \cite{vandendries1996,coste}. This is an exhaustive class of functions having stability properties under simple operations such as composition or summation, and that contains most functions met in practice: not only semialgebraic functions but also the exponential and the logarithm. As a consequence of this result, in most practical cases, accumulation points are Clarke critical for a randomized initialization and for all but a finite set of stepsizes \cite[Theorem 2.3.2]{bolte2022subgradient}.

We extend this analysis to a nonsmooth stochastic heavy ball method involving a stochastic conservative gradient oracle.  As a result, we propose a unifying setting that encompasses many modern implementations in machine learning. This general setting allows us to use different perspectives of the differential inclusion approach \cite{bianchi2021closed,benaim} in order to recover convergence results of some previous works \cite{Ruszczynski2020,bianchi2021closed}, and also to rectify certain limitations found in these prior studies:

- By using a conservative gradient oracle, we justify practical implementations of the method like gradient sampling with automatic differentiation output, or with the use of implicit differentiation. This technical aspect was not considered in \cite{bianchi2021closed} which assumes the access to a stochastic oracle centered around the Clarke subdifferential.

- It is often necessary to require that the critical values have an empty interior, a condition known as the Sard condition. This condition, crucial for obtaining convergence, is challenging to satisfy when dealing with objective functions defined as expectations. For instance, it arises under an abstract form in \cite{Ruszczynski2020}. We obtain convergence results even when the Sard condition is not met.  The first result we derive from \cite{benaim} is that accumulation points minimizing the Lyapunov function values are critical. Then, following the approach from \cite{bianchi2021closed}, we show the criticality of accumulation points that are consistently seen and called \emph{essential}.
When the sample distribution is absolutely continuous, we propose an assumption to ensure the Sard condition, which may be of interest. This assumption requires the density to be semialgebraic, or more generally \emph{globally subanalytic}: a large class of definable functions that contains analytic functions with polynomial growth, which may not be semialgebraic. This condition was recently introduced in the analysis of the stochastic subgradient method \cite{bolte2022subgradient}.

- Since it encompasses practical implementations as in \cite{bolte2022subgradient}, the analysis yields convergence results in a general sense and involves a notion of criticality depending on a conservative gradient that may differ from the usual Clarke subdifferential. A criticality notion of a similar kind also appeared in \cite{Ruszczynski2020} which uses the semismooth counterparts to conservative gradients, Norkin's generalized gradients \cite{norkin1980,norkin1986,ermol1998stochastic}. 
This particular aspect of the analysis highlights the possibility for accumulation points to be what we may call \emph{artificial critical points}, points that are critical with respect to the conservative gradient but not critical with respect to the Clarke subdifferential. We tackle this issue and demonstrate that randomizing initializations is sufficient to avoid nondifferentiability points and to achieve criticality in the usual sense, that is, with respect to the Clarke subdifferential. As a result, we complement the analysis conducted in \cite{Ruszczynski2020,bianchi2021closed} which did not address this particular question.

\paragraph{Organization of the paper.} The paper is organized as follows: in \Cref{section:mainresults}, we present our main convergence results stated in \Cref{th:convergenceconservative} and \Cref{th:convergenceclarke}. \Cref{section:materials} gathers necessary preliminaries for the differential inclusion method, conservative gradients, and for the understanding of definable sets.
In \Cref{section:nonsmoothanalysis} we prove our main convergence result \Cref{th:convergenceconservative} via the differential methods from \cite{benaim} and \cite{bianchi2021closed}. In \Cref{section:genericclarke}, we show the genericity of stochastic gradient sequences and then deduce \Cref{th:convergenceclarke}  on Clarke criticality when randomizing initializations.

\section{Main results}
\label{section:mainresults}
\paragraph{Notations.} The Clarke subdifferential of a locally Lipschitz function $F : \R^p \rightarrow \R$ is defined as $\partial^c F(w) := \conv \left\{\lim_{k \to \infty} \nabla F(w_k) \ : \  w_k \in \diff_F, w_k \underset{k \to \infty}{\to} w\right\}$ for all $w \in \R^p$, where $\operatorname{diff}_F$ is the differentiability set of $F$ which is of full measure in virtue of Rademacher's theorem. Given a function $z : \R \to \R^p$ we denote the derivative of $z$ by $\dot{z}$ whenever it is defined. We denote the Euclidean norm by $\|\cdot \|$. For a subset $A \subset \R^p$, $\conv A$ is its convex hull, $\dim A$ is the Hausdorff dimension of $A$, and we denote $\|A\| := \sup \{\|y\| \ : \ y \in A \}$. Given $x \in \R^p$, we call \emph{neighborhood} of $x$ any open subset of $\R^p$ (with respect to the Euclidean norm) containing $x$.

\paragraph{Framework.} We consider a stochastic minimization problem
\begin{equation}
    \label{eq:minimization}
    \min_{w \in \R^p} F(w) = \E_{\xi \sim P}[f(w, \xi)],
\end{equation}
where we assume $F$ to be well defined on $\R^p$ and bounded below by $F^* := \inf_{w \in \R^p} F(w) > - \infty$. We are interested in a stochastic heavy ball method \eqref{eq:HBoneequation}: 
\begin{equation}
    \label{eq:HBoneequation}
    w_{k+1} = w_k - \mu_k v(w_k, \xi_k) + \nu_k (w_k - w_{k-1}),
\end{equation}
where $(\xi_k)_{k \in \N}$ is an i.i.d sequence of random variables valued in a measurable space $(S, \mathcal{A})$. We denote by $P$ the law of $\xi_k$ for $k \in \N$. The function $v$ represents a stochastic first-order oracle to be defined in this section. As an example that falls in our analysis,   $v(\cdot, s)$ may be taken as a selection of the Clarke subdifferential of $f(\cdot, s)$ for almost all $s \in S$.

For the analysis to come, we shall also use the following form of the recursion \eqref{eq:HBoneequation}:

\begin{equation}
\label{eq:HBupdate}
\begin{aligned}
    w_{k+1}  & = w_{k} - \alpha_k y_{k}\\ 
    y_{k+1}  & = \beta_k v(w_{k+1}, \xi_{k+1}) + (1 - \beta_k) y_k.
\end{aligned}
\end{equation}
One can easily verify \eqref{eq:HBupdate} reduces to \eqref{eq:HBoneequation} with $\mu_k = \alpha_k \beta_{k-1}$ $\nu_k = \alpha_k(1 - \beta_{k-1})/\alpha_{k-1}$, and $y_0 = \frac{w_0 - w_1}{\alpha_0}$. The sequence $(w_{k})_{k \in \N}$ is adapted to the filtration generated by $(\xi_k)_{k \in \N}$, i.e., for all $k \in \N$, $w_k$ is measurable with respect to $\xi_k, \ldots, \xi_0$. 

\bigskip

We consider bounded trajectories of $(w_k)_{k \in \N}$ and make assumptions on the stepsizes $(\alpha_k)_{k \in \N}$ and $(\beta_k)_{k \in \N}$.
\begin{assumption}
\label{ass:stochasticalgorithm}
\begin{enumerate}
    \item[]
    \item $\sup_{k \in \N} \|w_k\| < \infty$ almost surely,
    \item For all $k \in \N$, $\alpha_k >0$, $\sum_{k =0}^{\infty} \alpha_k = \infty$ and  $\sum_{k=0}^\infty \alpha_k^2 < \infty$,
    \item There exists $r > 0$ such that $\alpha_k/\beta_k  \to r$, and $ \beta_k \in (0,1)$ for all $k \in \N$.
\end{enumerate}
\end{assumption}
\Cref{ass:stochasticalgorithm}.1 is common in stochastic algorithms in the nonconvex setting. Some solutions have been proposed in the literature to guarantee the boundedness of the sequence, although none of them is satisfactory. On this question, we refer the reader to a discussion provided in \cite{bolte2022subgradient}. \Cref{ass:stochasticalgorithm}.2 is a common assumption on the stepsizes in the stochastic approximation literature. As to \Cref{ass:stochasticalgorithm}.3, it is referred to as exponential memory in \cite{gadat2018stochastic}. Although we restrict our analysis to this case, other parametrizations of the memory coefficient may be considered like a polynomial one, see for instance \cite{gadat2018stochastic,pmlr-v115-orvieto20a}.

We assume access to a stochastic first-order oracle defined thanks to the notion of conservative gradient:
\begin{assumption}[Stochastic conservative gradient] \label{ass:stochasticconservativegradient}
\begin{enumerate}
    \item[]
    \item $f$ and $v$ are jointly measurable, and for almost all $s \in S$, $f(\cdot, s)$ and $v(\cdot, s)$ are semialgebraic.
    \item For almost all $s \in S$, $f(\cdot, s)$ is locally Lipschitz. $v$ is a selection in a convex-valued and measurable set-valued map $D : \R^p \times S \rightarrow \R^p$ which is such that for almost all $s \in S$, $D(\cdot, s)$ is a conservative gradient for $f(\cdot, s)$.
    \item There exists  a square integrable function $\kappa : S \to \R_+$ and $\psi : \R_+ \to \R_+$ nondecreasing and locally bounded such that for almost all $s \in S$, $\|D(w, s)\| \leq \kappa(s) \psi(\|w\|)$  for all $w \in \R^p$. 
\end{enumerate}
\end{assumption}

The notion of conservative gradient, introduced in \cite{bolte2019conservative}, will be properly explained in \Cref{subsection:setvalued_conservative}. Considering this kind of oracle allows us to cover practical implementations of the method \eqref{eq:HBoneequation}. While $D$ can be taken as the classical Clarke subdifferential $\partial^c_w f$ with respect to $w$, it can also model the backpropagation oracle, used in deep learning, as well as automatic differentiation based on implicit differentiation \cite{bai2019}.

The expected conservative gradient will be central to our analysis and from now on, we set
$$D_F := \E_{\xi \sim P}[D(\cdot, \xi)].$$ 

Let us remark that the expectation is taken on a set-valued map, in which case the appropriate definition of the integral, due to Aumann (\Cref{def:aumann}), is used here.

Although a part of our results only relies on the assumptions above, we may obtain stronger convergence results by assuming the following on the distribution $P$:

\begin{assumption}[Semialgebraic distribution] \label{ass:sard} $S=\R^m$, $f$ and $D$ are jointly semialgebraic, and one of the following conditions holds:
\begin{itemize}
    \item[--] $P$ is finitely discrete.
    \item[--] $P$ has a semialgebraic density with respect to Lebesgue.
\end{itemize}
\end{assumption}
This assumption aims to satisfy the Sard condition for the expectation $F$, see \Cref{subsection:definablesets}. On the Sard condition, we also refer the reader to \cite{shikhman}.

\begin{remark}[On definable functions]
    To lighten the presentation, we only consider semialgebraic functions in \Cref{ass:stochasticconservativegradient} and \Cref{ass:sard}. This setting might seem quite restrictive as it doesn't capture some functions used in machine learning such as the exponential or the logarithm. To address this concern, functions that are definable in an o-minimal structure can be considered instead of mere semialgebraic ones. We discuss this possibility in \Cref{subsection:definablesets}.

\end{remark}

Without \Cref{ass:sard}, it is possible to obtain convergence results in the sense of \emph{essential} accumulation points. This notion of accumulation points was introduced in \cite{bianchi2021closed,Bolte2020LongTD}. Intuitively, they are accumulation points that are consistently seen in the long run:

\begin{definition}[Essential accumulation points] Let $(w^*, y^*)$ be an accumulation point of $(w_k,y_k)_{k \in \N}$. $(w^*, y^*)$ is called essential if almost surely,
          $$\limsup_{k \to \infty}\frac{\sum_{i=0}^{k} \alpha_i \mathbbm{1}_{(w_k,y_k) \in U}}{\sum_{i=0}^k \alpha_i}>0 \text{ for all neighborhood $U$ of  $(w^*, y^*)$}.$$
\end{definition}

\bigskip

We are now ready to present our main convergence results:

\begin{theorem}[Conservative criticality]
\label{th:convergenceconservative}
 Let $(w_k,y_k)_{k \in \N}$ be generated by \eqref{eq:HBupdate}. Under \Cref{ass:stochasticalgorithm} and \Cref{ass:stochasticconservativegradient}, the following hold almost surely in the sequence $(\xi_k)_{k \in \N}$:
\begin{enumerate}
    \item (Essential criticality). Every essential accumulation point $(w^*, y^*)$ of the sequence $(w_k,y_k)_{k \in \N}$ satisfies $0 \in D_F(w^*)$ and $y^* = 0$.
    \item (Minimal criticality). Every accumulation point $(w^*, y^*)$ of $(w_k,y_k)_{k \in \N}$ such that \linebreak $\underset{k \to \infty}{\liminf} \: F(w_k) + \frac{r}{2} \|y_k\|^2 = F(w^*) + \frac{r}{2} \|y^*\|^2$ satisfies $0 \in D_F(w^*)$ and $y^* = 0$.
    \item (Objective function convergence and criticality). If \Cref{ass:sard} also holds, then every accumulation point $(w^*, y^*)$ of $(w_k,y_k)_{k \in \N}$ satisfies  $0 \in D_F(w^*)$ and $y^* = 0$, and $F(w_k)$ converges as $k \to \infty$.
\end{enumerate}
\end{theorem}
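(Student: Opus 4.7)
The plan is to cast the recursion \eqref{eq:HBupdate} as a stochastic approximation of a differential inclusion on $\R^{2p}$. Setting $z = (w,y)$, the natural candidate is
\begin{equation*}
\dot z \in H(z), \qquad H(w,y) = \bigl\{(-y,\, \tfrac{1}{r}(g-y)) : g \in D_F(w)\bigr\}.
\end{equation*}
Using \cite[Theorem 3.10]{bolte2022subgradient} (expectation of a conservative gradient is a conservative gradient of the expectation), $D_F$ is a conservative gradient for $F$. Take $V(w,y) = F(w) + \tfrac{r}{2}\|y\|^2$ as a Lyapunov function candidate. Along any absolutely continuous solution $z(\cdot)$ of the DI, the conservativity of $D_F$ yields $\tfrac{d}{dt}V(z(t)) = \langle g, -y\rangle + \langle y, g - y\rangle = -\|y(t)\|^2$ for a.e.\ $t$ and some selection $g \in D_F(w(t))$. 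Hence $V$ strictly decreases except on $\{y = 0\}$, and the equilibria of $H$ coincide with $\{(w,0) : 0 \in D_F(w)\}$, which is exactly the target critical set. Assumption 2.3 also guarantees that $H$ is convex-valued, upper semicontinuous with closed graph, and has the linear growth needed for the DI to admit global solutions and to fit the frameworks of \cite{benaim} and \cite{bianchi2021closed}.

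Next I would verify that the piecewise affine time-change of $(w_k,y_k)$ with increments $\alpha_k$ is almost surely a bounded asymptotic pseudo-trajectory (APT) of $H$. Using $\alpha_k/\beta_k \to r$, one rewrites
\begin{equation*}
y_{k+1} - y_k = \tfrac{\alpha_k}{r}\bigl(v(w_{k+1},\xi_{k+1}) - y_k\bigr) + \varepsilon_k, \qquad \varepsilon_k = \bigl(\beta_k - \tfrac{\alpha_k}{r}\bigr)\bigl(v(w_{k+1},\xi_{k+1}) - y_k\bigr),
\end{equation*}
and splits the $y$-increment into (i) the mean-field term $\tfrac{\alpha_k}{r}(D_F(w_{k+1}) - y_k)$, which together with $w_{k+1} - w_k = -\alpha_k y_k$ produces a drift in $H(w_{k+1},y_k)$ (itself $O(\alpha_k\|y_k\|)$-close to $H(w_k,y_k)$), (ii) the martingale noise $\tfrac{\alpha_k}{r}(v(w_{k+1},\xi_{k+1}) - D_F(w_{k+1}))$, and (iii) the deterministic residual $\varepsilon_k = o(\alpha_k)$. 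Almost sure boundedness of $(y_k)_{k \in \N}$ is obtained by combining Assumption 2.2.1 with the growth bound $\|D(\cdot,\xi)\| \leq \kappa(\xi)\psi(\|\cdot\|)$ to control the recursion defining $y_k$ as a convex combination of past oracle calls. The noise term is handled by a standard Doob/Robbins--Siegmund argument exploiting $\sum_k \alpha_k^2 < \infty$ and the square integrability of $\kappa$, yielding an a.s.\ summable martingale contribution. Collecting these pieces and invoking \cite{benaim} shows the APT property.

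With the APT property and the Lyapunov structure in hand, each part of the theorem follows from a standard differential inclusion argument. For part 2, an accumulation point realizing $\liminf_k V(w_k,y_k)$ lies in the limit set of the APT, which is invariant under the semiflow of $H$; the strict Lyapunov decrease off $\{y=0\}$ then forces $y^*=0$ and $0\in D_F(w^*)$. Part 1 uses the refinement of \cite{bianchi2021closed}: the set of essential accumulation points is contained in the essential invariant set, on which $V$ is constant, hence in the equilibria. For part 3, Assumption 2.4 combined with the semialgebraic projection results recalled in \Cref{subsection:definablesets} yields a Sard-type property: $F$ restricted to $\{w : 0 \in D_F(w)\}$ has empty-interior image, so $V$ takes values in a set with empty interior on the equilibrium set. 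Standard APT theory then gives convergence of $V(w_k,y_k)$ and, using $y_k \to 0$ along accumulating subsequences, convergence of $F(w_k)$.

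The main technical obstacle is the APT verification under the two-timescale structure $(\alpha_k,\beta_k)$, since the $y$-update uses $\beta_k$ rather than $\alpha_k$: one must simultaneously control the deterministic residual $\varepsilon_k$, ensure almost sure boundedness of $(y_k)$ despite $\kappa$ being only square-integrable (so $\kappa(\xi_k)$ need not be bounded along trajectories), and establish uniform equicontinuity of the interpolated process on compact time intervals. The other delicate point is the Sard-type statement in part 3, which requires the semialgebraic structure of the density to guarantee that $D_F$ remains definable after integration of $D(\cdot,\xi)$ against $P$; this is where \Cref{ass:sard} enters substantively, and not merely as a smoothness assumption.
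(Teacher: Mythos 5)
Your route coincides with the paper's: the same limiting inclusion (your $H$ is the paper's \eqref{eq:HBdifferentialinclusion} up to a harmless time rescaling by $r$), the same energy $E(w,y)=F(w)+\tfrac r2\|y\|^2$ turned into a Lyapunov function for $\Lambda=\{0\in D_F\}\times\{0\}$ via the conservative chain rule and the interchange theorem of \cite{bolte2022subgradient}, the perturbed-solution/limit-set machinery of \cite{benaim} for parts 2 and 3, the essential-point result of \cite{bianchi2021closed} for part 1, and, for part 3, definability of $F$ and $D_F$ after integration (Cluckers--Miller, in the absolutely continuous case) followed by the definable Sard theorem for conservative gradients. However, two steps that you assert rather than prove would fail as stated, and they are precisely where the paper does its technical work.

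First, \Cref{ass:stochasticconservativegradient}.3 does \emph{not} give linear growth of $H$: $\psi$ is only nondecreasing and locally bounded (think polynomial growth of neural networks), so the standing linear-growth hypothesis of \cite{benaim,bianchi2021closed} is not available; the paper explicitly designs around this. The fix (\Cref{prop:lyapunovfunction}) is to prove global existence of solutions directly: the energy identity $\tfrac{\di}{\di t}E=-r\|y\|^2$ bounds $\int_0^T\|\dot w\|\,\di t$ by $rT+E(w(0),y(0))-F^*$, hence bounds $\|w\|$ on $[0,T]$, and then $\int_0^T\|\dot y\|\,\di t$ is controlled through $\psi$ and $\E_{\xi\sim P}[\kappa(\xi)]$, excluding finite-time explosion. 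Your sketch needs this argument (or a truncation to a compact set containing the a.s.\ bounded iterates) in place of the linear-growth claim. Second, boundedness of $(y_k)$ cannot be read off from ``$y_k$ is a convex combination of past oracle calls'': each call is only bounded by $\kappa(\xi_k)\psi(\|w_k\|)$ with $\kappa$ square integrable, so $\sup_k\kappa(\xi_k)$ is typically infinite almost surely; you flag this obstacle but do not resolve it. The paper's \Cref{lem:velocityboundedness} resolves it by shifting with the martingale tail, $\tilde y_k:=y_k+\sum_{i\ge k}\beta_i u_{i+1}$: the recursion for $\tilde y_k$ is a convex combination involving the bounded means $V(w_{k+1})=\E_{\xi\sim P}[v(w_{k+1},\xi)]$ plus the vanishing gap $\tilde y_k-y_k$, and almost sure convergence of $\sum_i\beta_i u_{i+1}$ (square-integrable martingale, $\sum_k\beta_k^2<\infty$) then yields $\sup_k\|y_k\|<\infty$, which is also what makes your fattening radius $\delta_k=\alpha_k\|y_k\|$ tend to $0$. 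With these two lemmas supplied (and noting that the chain rule in \Cref{def:conservativegradient} holds for \emph{every} element of $D_F(\gamma(t))$, which is what licenses using the particular selection driving $\dot y$), the remainder of your outline matches the paper's proof.
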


As for the stochastic subgradient method \cite{bolte2022subgradient}, the generalized criticality $0 \in D_F(w^*)$ is inherent to subgradient sampling. Nonetheless, under proper randomization of the initialization, we can retrieve criticality with respect to  the Clarke subdifferential:

\begin{theorem}[Clarke criticality under randomized initializations] \label{th:convergenceclarke} Let $(w_k,y_k)_{k \in \N}$ be generated by \eqref{eq:HBupdate}. Under \Cref{ass:stochasticalgorithm} and \Cref{ass:stochasticconservativegradient}, there exists a set $W \subset \R^p \times \R^p$ of full measure  such that if  $(w_1, w_0) \in W$,  then almost surely, \Cref{th:convergenceconservative}.1-2 hold with $\partial^c F$ inplace of $D_F$.  If \Cref{ass:sard} also holds, then  \Cref{th:convergenceconservative}.3 holds with $\partial^c F$ in place of $D_F$.

If $P$ has a density with respect to Lebesgue, and $f$ and $v$ are jointly semialgebraic, then $W^c$ is a countable union of manifolds of dimension at most $2p-1$.
    
\end{theorem}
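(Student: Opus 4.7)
\textbf{Reduction to \Cref{th:convergenceconservative}.} The plan is to reduce the Clarke statement to the conservative one by controlling the artifact locus
\[
E := \{w \in \R^p : D_F(w) \neq \partial^c F(w)\}.
\]
By the fundamental definable-calculus property of conservative gradients (as used in \cite{bolte2022subgradient}), $E$ is a definable subset of $\R^p$ of Hausdorff dimension at most $p - 1$. Consequently, if I can exhibit a full-measure $W \subset \R^p \times \R^p$ such that for any $(w_1, w_0) \in W$ the trajectory $(w_k)_{k \in \N}$ almost surely avoids $E$, then $D_F(w_k) = \partial^c F(w_k)$ holds all along the trajectory, and in particular at every essential, minimal, or (under \Cref{ass:sard}) arbitrary accumulation point. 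The three conclusions of \Cref{th:convergenceconservative} then translate verbatim with $\partial^c F$ in place of $D_F$.

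\textbf{Construction of $W$ and genericity.} I would recast \eqref{eq:HBupdate} in state-space form over $\R^{2p}$, parameterized by $(w_0, y_0)$ (equivalently $(w_1, w_0)$), and observe that the iterate $(w_k, y_k)$ is the image of $(w_0, y_0)$ under $k$ successive applications of the (definable in the variables $(w, y, s)$) update
\[
\Phi_k(w, y, s) = \bigl(w - \alpha_k y,\ \beta_k v(w - \alpha_k y, s) + (1 - \beta_k) y\bigr).
\]
For each fixed finite sample path $(\xi_1, \ldots, \xi_k)$, the set of $(w_0, y_0)$ for which the $w$-coordinate of the $k$-th iterate lands in $E$ is the preimage of $E$ under a definable map $\R^{2p} \to \R^p$; by cell decomposition and the standard fiber-dimension estimate in o-minimal geometry, this preimage is definable of Lebesgue measure zero in $\R^{2p}$. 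A Fubini/Tonelli argument, justified by the joint measurability of $\Phi_k$ and the integrability bound in \Cref{ass:stochasticconservativegradient}.3, then shows that the set of initial conditions for which the $k$-th iterate hits $E$ with positive probability is itself Lebesgue null. Taking the complement of the countable union of these null sets over $k \in \N$ yields the desired full-measure $W$.

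\textbf{Dimension estimate and main obstacle.} Under the additional assumption that $P$ admits a semialgebraic Lebesgue density and $(f, v)$ are jointly semialgebraic, the bad set at stage $k$ is the projection onto $\R^{2p}$ of a semialgebraic subset of $\R^{2p} \times \R^{mk}$; by the Tarski--Seidenberg projection theorem and cell decomposition, it is a finite union of semialgebraic smooth manifolds, whose dimension inherits the codimension-one structure of $E$ and therefore does not exceed $2p - 1$. Taking the countable union over $k$ produces $W^c$ as a countable union of manifolds of dimension at most $2p - 1$. The main obstacle here is carrying out the codimension-one count when $\Phi_k$ fails to be a submersion: this degeneracy occurs exactly on the nondifferentiability locus of $v(\cdot, s)$, which is itself a definable set of codimension one that may interact nontrivially with $E$. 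The resolution is to use o-minimal transversality together with the intrinsic fact that the nondifferentiability stratum is itself small enough that, even after absorbing it into the exceptional set, the preimage of $E$ does not fill any open cell of $\R^{2p}$, so the projection retains codimension at least one after marginalizing out the noise variables.
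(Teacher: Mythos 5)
Your central reduction does not work. Knowing that $D_F(w_k)=\partial^c F(w_k)$ at every iterate tells you nothing at an accumulation point: $w^*$ is a \emph{limit} of the $w_k$, not one of them, and in nonsmooth problems the limit points typically sit exactly on the nondifferentiability/artifact locus (minimize $|x|$: every iterate is a smooth point, the limit is the kink). So the step ``the equality holds along the trajectory, and in particular at every essential, minimal, or arbitrary accumulation point'' is a non sequitur, and the conclusions of \Cref{th:convergenceconservative} do not ``translate verbatim.'' The argument the paper uses is different in kind: for generic $(w_1,w_0)$ the iterates land in the sets $R_{s_k}$ where $v(w_k,\xi_k)=\nabla_w f(w_k,\xi_k)$, and after a dominated-convergence interchange the drift satisfies $V(w_k)=\E_{\xi\sim P}[v(w_k,\xi)]=\nabla F(w_k)\in\partial^c F(w_k)$; then, as in \Cref{remark:replacebyclarke}, the \emph{entire} differential-inclusion analysis of \Cref{section:nonsmoothanalysis} is rerun with $\partial^c F$ (itself a conservative gradient, with the same Lyapunov function) as the driving map, and Clarke criticality of the accumulation points is produced by that machinery --- by the closed-graph/Lyapunov arguments in the limit --- not by transporting a pointwise equality $D_F=\partial^c F$ to the limit point.

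Two further gaps. First, the first part of \Cref{th:convergenceclarke} is stated under \Cref{ass:stochasticalgorithm} and \Cref{ass:stochasticconservativegradient} only; without \Cref{ass:sard} neither $F$ nor $D_F$ is known to be definable (expectations of semialgebraic integrands over a general $P$ need not be), so your claim that $E=\{D_F\neq\partial^c F\}$ is definable of dimension at most $p-1$ is unsupported precisely where you need it. The paper sidesteps this by working samplewise with the definable dense sets $R_s$, which exist for each fixed $s$ by \Cref{ass:stochasticconservativegradient}, and only invokes Fubini afterwards. Second, the preimage of a null (or codimension-one) set under a definable map need not be null --- degenerate maps are the obvious obstruction, and you acknowledge it --- but your proposed fix via ``o-minimal transversality'' and the smallness of the nondifferentiability stratum is not a proof. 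The actual resolution, which is where the real work lies, is \Cref{lem:definablediffeo}: in the second-order variables the step map $\Psi_{\mu,\nu,s}$ has, on $R_s\times\R^p$, a Jacobian with the explicit block structure that is invertible whenever the momentum coefficient $\nu\neq 0$, so $\Psi_{\mu,\nu,s}$ is a local diffeomorphism on a dense open set and pulls back sets of dimension at most $2p-1$ to sets of dimension at most $2p-1$; the inductive avoidance statement (\Cref{prop:measuregenericitygradient}) and the final dimension count for $W^c$ both hinge on this computation, which your proposal never supplies.
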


\section{Preliminary materials}
\label{section:materials}

In this section, we present some necessary definitions for the convergence analysis to come in \Cref{section:nonsmoothanalysis}. A first subsection will be dedicated to the differential inclusion method that we use in order to analyze the nonsmooth stochastic heavy ball method \eqref{eq:HBupdate}. The definitions of this subsection come from the general framework \cite{benaim} and also intervene in the closed-measure perspective \cite{bianchi2021closed}.  A second subsection will be dedicated to sets that are definable in an o-minimal structure, a generalization of semialgebraic sets that allows to capture the simple geometry of functions used in practice.

\subsection{Differential inclusion method and conservative gradients}
\label{subsection:setvalued_conservative}

Given a set-valued map $H : \R^p \rightrightarrows \R^p$, we are interested in recursions of the form
\begin{equation}
    \label{eq:discreterecursion}
    z_{k+1} \in z_k + \alpha_k (H^{\delta_k}(z_k) + \epsilon_k),  \hspace{1cm} \text{for all } k \in \N,
\end{equation}
where $(\epsilon_k)_{k \in \N}$ is a martingale difference sequence, $(\alpha_k)_{k \in \N}$ are vanishing stepsizes and $(\delta_k)_{k \in \N}$ is a positive and random sequence such that $\delta_k \to 0$ as $k \to 0$ almost surely. For $\delta  > 0$, $H^\delta$ is the fattened map:
\begin{align*}
    H^\delta(z) := & \left\{  y \in \R^p :  \exists  (z', y') \in \R^p \times \R^p, \right. \\ & \left.
y' \in H(z'), \|z' -  z\| \leq \delta, \|y'- y\| \leq \delta \right\}.
\end{align*}

The (stochastic) subgradient method \cite{Bolte2020LongTD,bolte2022subgradient} and the stochastic heavy ball method \eqref{eq:HBupdate} fall in this setting.

\paragraph{Limiting dynamical system and Lyapunov function.}

The asymptotic behavior of the recursion \eqref{eq:discreterecursion} is captured by a differential inclusion, a generalization of ordinary differential equations to a set-valued vector field:
\begin{equation}
\label{eq:DIdeff}
    \dot{z} \in H(z).
\end{equation}
This type of dynamical system naturally arises when dealing with nonsmooth optimization algorithms due to the set-valued aspect of generalized gradients for nonsmooth functions, such as the Clarke subdifferential. In order to ensure the existence of solutions for the differential inclusion \eqref{eq:DIdeff}, the \emph{upper semicontinuity} of $H$ is often required. This condition is also equivalent to $H$ having a \emph{closed graph} and being \emph{locally bounded}.

\begin{definition} Let $H : \R^p \rightrightarrows \R^p$ be a set-valued map.
\begin{itemize}
    \item (Upper semicontinuity) $H$ is called upper semicontinuous if for all $\epsilon > 0$, for all $x \in \R^p$, there exists $\delta > 0$ such that for all $y \in B(x, \delta)$,  $H(y) \subset H(x) + B(0, \epsilon)$.
    \item (Closed graph) $H$ is said to have a closed graph if its graph, which is the set $\{(x,y) \in \R^p \times \R^p \ : \ y \in H(x) \}$, is closed in $\R^p \times \R^p$.
    \item (Local boundedness) $H$ is locally bounded if for all compact set $K \subset \R^p$, there exists $M > 0$ such that for all $x \in K$, $\|y\| \leq M$ for all $y \in H(x)$.
\end{itemize}

\end{definition}

Assuming furthermore that $H$ is nonempty, compact, and convex-valued, one has the local existence of solutions for the differential inclusion \eqref{eq:DIdeff}. Then, by preventing explosion in finite time, a local solution can be extended to a global solution, defined on $\R_+$. For a complete theoretical development on differential inclusions, see for instance \cite{aubin,Filippov1988DifferentialEW}.

A solution to \eqref{eq:DIdeff} is defined as an absolutely continuous function $z : \R_+ \to \R^p$ such that $\dot{z}(t) \in H(z(t))$ holds for almost all $t \geq 0$. We may then define the set-valued flow $\Phi$ for $z_0 \in \R^p$ and $t \geq 0$:
\begin{equation*}
    \Phi_t(z_0) := \{z(t) : z \text{ is a solution to \eqref{eq:DIdeff} and } z(0) = z_0 \}.
\end{equation*}
We also use the notation $\Phi(U)$ to denote the set of solutions starting from a subset $U \subset \R^p$.

A \emph{Lyapunov function} characterizes the stability of the limiting differential inclusion.

\begin{definition}[Lyapunov function] \label{def:lyapunov} A continuous function  $E : \R^p \to \R$ is a Lyapunov function for a set $\Lambda \subset \R^p$ and for the differential inclusion \eqref{eq:DIdeff} if
\begin{align*}
    &\forall x \in \R^p \setminus \Lambda \;, \forall t > 0, \forall y \in \Phi_t(x), \: E(y) <  E(x), \\ &\forall x \in \Lambda,\; \forall t \geq 0, \forall y \in \Phi_t(x), \: E(y) \leq E(x).
\end{align*}
\end{definition}

\paragraph{Interpolated process and perturbed solution.} In order to relate the discrete recursion \eqref{eq:discreterecursion} to its continuous time version \eqref{eq:DIdeff}, one may consider the interpolated process $\Bar{z} : \R_+ \to \R^p$ associated to the sequence $(z_k)_{k \in \N}$. We recall its construction:

Let $\tau_0 = 0$ and for all $k \in \N^*$, $\tau_k := \sum_{i=0}^{k-1} \alpha_i$. Then define the continuous function $\Bar{z}$ so that for each $k \in \N$, $\Bar{z}(\tau_k) = x_k$ and $\Bar{z}$ is affine on $[\tau_k, \tau_{k+1}]$.

Note that the limit set of the interpolated process is equal to the accumulation points of $(z_k)_{k \in \N}$. Under some conditions on the noise sequence and the stepsizes, the interpolated process can be characterized as a \emph{perturbed solution}:

\begin{definition}[Perturbed solution] \label{def:perturbedsol} An absolutely continuous function $x$ is called perturbed solution of (\ref{eq:DIdeff}) if there exists a locally integrable function $U : \R_+ \to \R^p$ satisfying
\begin{equation*}
    \underset{t \to \infty}{\lim} \underset{v \in [0,T]}{\sup} \left\|\int_{t}^{t+v} U(s) \di s \right\| =0,
\end{equation*}
and a positive function $\delta : \R_+ \to \R_+$ satisfying $\delta(t) \to 0$ as $t \to \infty$, such that for almost all $t > 0$, $\dot{x}(t) \in H^{\delta(t)}(x(t)) + U(t)$.
\end{definition}

\paragraph{Conservative gradients.}

\Cref{ass:stochasticconservativegradient} uses a generalized notion of gradients called \emph{conservative gradients} \cite{bolte2019conservative}.  They are locally bounded set-valued maps having closed graphs that satisfy a chain rule along the curves:

\begin{definition}[Conservative gradient and path differentiability] \label{def:conservativegradient} A set-valued map $D_F : \R^p \rightrightarrows \R^p$ is a conservative gradient for $F$ if it has closed graph, nonempty valued, locally bounded and for all absolutely continuous curve $\gamma : \R_+ \rightarrow \R^p$, the chain rule 
    \begin{equation*}
        \frac{\di (F \circ \gamma)(t)}{\di t} = \langle v, \dot{\gamma}(t)  \rangle, \text{ for all } v \in D_F(\gamma(t)) 
    \end{equation*}
holds for almost all $t \in \R_+$. In this case, $F$ is called \emph{path differentiable}.
\end{definition}

Conservative Jacobians can also be defined similarly, see \cite[Definition 4]{bolte2019conservative}. The chain rule property is well-suited to the differential inclusion approach as it ensures a descent mechanism and allows for the existence of a Lyapunov function for the limiting dynamical system. As a consequence of this property, the objective function is a Lyapunov function in the case of the subgradient method. As to the stochastic heavy ball method \eqref{eq:HBupdate}, a Lyapunov function may be the total energy and will be exposed thereafter in \Cref{section:nonsmoothanalysis}.

Conservative derivatives enable the extension of calculus rules to nonsmooth functions. For instance, given two path differentiable functions $f$ and $g$ with conservative gradients $D_f$ and $D_g$ respectively, then  $D_f + D_g$ is a conservative gradient for $f + g$. A chain rule for compositions also holds and justifies backpropagation, which comes from applying the chain rule formula to Clarke Jacobians. Precisely, given a path differentiable function $G : \R^p \rightarrow \R$ and the output $v(w)$ of backpropagation on $G(w)$, we may find a conservative  gradient $D_G$ such that $v(w) \in D_G(w)$ for all $w$, see \cite{bolte2020mathematical,bolte2019conservative}. Other differentiation rules such as nonsmooth implicit differentiation can also be represented by conservative derivatives \cite{bolte2021nonsmooth}.

A property of conservative gradients that is central in this work, is the interchanging of expectation and conservative gradient \cite[Theorem 3.10]{bolte2022subgradient} which justifies sampling with nonsmooth oracles used in practice, such as automatic differentiation.

Let us first recall the notion of expectation for set-valued maps:

\begin{definition}[Aumann's integral] \label{def:aumann} Let $D : \R^p \times S \rightrightarrows \R^p$ be measurable. Then the expectation of $D$ with respect to $P$ is defined for all $w \in \R^p$ as
\begin{align*}
    \E_{\xi \sim P}[D(w, \xi)] := \{ \int_S h(w,s) \di P (s)  \ :  \   & h(w, \cdot) \text{ is integrable }, \\ & h(w,s) \in D(w,s)  \text{ for all } s \in S \}.
\end{align*}
\end{definition}

Conservative differentiation of an expectation is stated as follows:

\begin{theorem} \cite[Theorem 3.10]{bolte2022subgradient} \label{th:integralconservative} Let $D : \R^p \times S \rightrightarrows \R^p$ satisfy \cref{ass:stochasticconservativegradient}. Then $D_F = \E_{\xi \sim P}[D(\cdot, \xi)]$ is a conservative gradient for $F$.
\end{theorem}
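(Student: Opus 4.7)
The plan is to verify the four defining properties of a conservative gradient (\Cref{def:conservativegradient}) for $D_F = \E_{\xi \sim P}[D(\cdot, \xi)]$: nonempty (convex) values, local boundedness, closed graph, and the chain rule along absolutely continuous curves. The first two are direct consequences of \Cref{ass:stochasticconservativegradient}. Measurability of $D(w, \cdot)$ together with the Kuratowski–Ryll-Nardzewski selection theorem yields measurable selections, and the integrable envelope $\|D(w, s)\| \leq \kappa(s)\psi(\|w\|)$ with $\kappa \in L^1(P)$ makes such selections integrable; so $D_F(w) \neq \emptyset$. Convexity transfers from the convex values of $D(\cdot, s)$ to the Aumann integral, and the bound $\|D_F(w)\| \leq \E[\kappa(\xi)]\,\psi(\|w\|)$ gives local boundedness.

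For the closed graph, suppose $w_n \to w$ and $v_n \to v$ with $v_n \in D_F(w_n)$. Write $v_n = \int h_n(s)\,\di P(s)$ with $h_n(s) \in D(w_n, s)$. The sequence $(h_n)$ is uniformly bounded in $L^1(P; \R^p)$ by $\kappa(s)\psi(M)$ for some $M$, hence uniformly integrable. By a Dunford–Pettis / Komlós-type argument one extracts a subsequence whose Cesàro means converge $P$-almost everywhere to some $h$ with $\int h\,\di P = v$. The closed graph of each $D(\cdot, s)$ together with the convexity of its values (needed to pass convex combinations into the set) gives $h(s) \in D(w, s)$ for $P$-a.e. $s$, and hence $v \in D_F(w)$.

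The main obstacle is the chain rule, which requires swapping differentiation along $\gamma$ with the expectation in $\xi$ at the level of the set-valued map. Let $\gamma : \R_+ \to \R^p$ be absolutely continuous, bounded by some $M$ on a fixed finite interval. The envelope bound makes $F$ locally Lipschitz and $f(\cdot, s)$ locally Lipschitz with constant $\kappa(s)\psi(M)$; in particular $F \circ \gamma$ and each $f(\gamma(\cdot), s)$ are absolutely continuous and differentiation under the integral sign yields
\begin{equation*}
    \tfrac{\di}{\di t} F(\gamma(t)) = \int_S \tfrac{\di}{\di t} f(\gamma(t), s)\,\di P(s)
\end{equation*}
for almost every $t$, with $L^1(P)$-dominating function $\kappa(s)\psi(M)\|\dot\gamma(t)\|$. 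For each fixed $s$, the conservativity of $D(\cdot, s)$ gives a Lebesgue-null set $N_s \subset \R_+$ off of which the pointwise chain rule holds for \emph{every} element of $D(\gamma(t), s)$. By joint measurability (\Cref{ass:stochasticconservativegradient}.1) the set $\{(t,s) : t \in N_s\}$ is measurable in $\R_+ \times S$, and by Fubini it has zero product measure; therefore there is a Lebesgue-null set $N \subset \R_+$ such that for every $t \notin N$, the pointwise chain rule at $t$ holds simultaneously for $P$-a.e. $s$.

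To conclude, fix $t \notin N$ and any $v \in D_F(\gamma(t))$, and write $v = \int h(s)\,\di P(s)$ with a measurable selection $h(s) \in D(\gamma(t), s)$. Since for $P$-a.e. $s$ the chain rule at $t$ applies to \emph{every} element of $D(\gamma(t), s)$ and in particular to $h(s)$, we obtain $\tfrac{\di}{\di t} f(\gamma(t), s) = \langle h(s), \dot\gamma(t)\rangle$ for $P$-a.e. $s$, and integrating gives
\begin{equation*}
    \tfrac{\di}{\di t} F(\gamma(t)) = \int_S \langle h(s), \dot\gamma(t)\rangle\,\di P(s) = \langle v, \dot\gamma(t)\rangle.
\end{equation*}
The delicate step is the Fubini/measurable-selection interplay: the null set on which the fiberwise chain rule fails depends on $s$, and the selection $h$ realizing $v$ depends on $t$, so one must first collapse the joint exceptional set via Fubini before picking the selection, rather than the other way around.
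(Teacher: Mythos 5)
The paper does not actually prove this statement: it is imported directly from \cite[Theorem 3.10]{bolte2022subgradient}, so there is no internal proof to compare against. Your argument is a sound reconstruction along the same lines as the cited proof: nonemptiness, convexity and local boundedness of the Aumann integral follow from \Cref{ass:stochasticconservativegradient}, the closed graph from a dominated (Komlós/Mazur) argument using the integrable envelope $\kappa(\cdot)\psi(M)$ together with convexity and upper semicontinuity of $D(\cdot,s)$, and the chain rule of \Cref{def:conservativegradient} by collapsing the $s$-dependent exceptional null sets via Fubini \emph{before} choosing the selection realizing a given $v \in D_F(\gamma(t))$ — the order of quantifiers you single out is indeed the crux, and your null set $N$ is correctly independent of $v$. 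The one point to tighten is the measurability bookkeeping: product-measurability of $\{(t,s) : t \in N_s\}$ relies on the measurability of the set-valued map $D$ from \Cref{ass:stochasticconservativegradient}.2 (not merely joint measurability of $f$ and $v$), e.g.\ via measurability of $(t,s) \mapsto \sup_{u \in D(\gamma(t),s)} \left|\langle u, \dot\gamma(t)\rangle - \tfrac{\di}{\di t} f(\gamma(t),s)\right|$ over the compact values of $D(\gamma(t),s)$; this is standard and fillable, and with it your proof is complete.
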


The distinctive feature of the conservative gradients model is that it doesn't derive from a formula involving the function, in contrast to the Clarke subdifferential.  Instead, it arises from the property of the set-valued map to account for the variations of the function. As a result, we may have many candidates to be conservative gradients given the same function. Despite this non-usual aspect, this variational model is a faithful representation of the practice. Indeed, in modern implementations, first-order oracles are built upon a representation of the function to differentiate. For instance, backpropagation is applied to a compositional expression,  subgradient sampling uses the function's representation as an expectation, and implicit differentiation uses the property of the function to be the solution path of an equation. 

\paragraph{Artifacts and artificial critical points.} For a path differentiable function $F$, conservative gradients are equal to $\nabla F$ Lebesgue almost everywhere \cite[Theorem 1]{bolte2019conservative}. In particular, this implies that the Clarke subdifferential is the minimal convex-valued conservative gradient, and any conservative gradient is equal to the Clarke subdifferential Lebesgue almost everywhere. Given a conservative gradient $D_F$, we may call  \emph{artifacts} the set where $D_F \neq \partial^c F$. When analyzing first-order methods with a conservative gradient $D_F$, a generalized notion of criticality comes out and may include \emph{artificial critical points}, which are points where $0 \in D_F(a)$ but $0 \notin \partial^c F(a)$. These points can have absurd locations. Indeed, for an arbitrary point $a$, one may find a conservative gradient $D$ that satisfies $0 \in D(a)$. Such a conservative gradient can be constructed by changing the value of $\partial^c F(a)$ to $\partial^c F(a) \cup \overline{B(0,1)}$.

This observation has motivated some previous works \cite{bolte2020mathematical,bianchi2020,bolte2022subgradient} demonstrating that for the stochastic subgradient method, artifacts are often avoided and criticality with respect to the Clarke subdifferential is often achieved in place of the general criticality notion $0 \in D_F(w^*)$. In \Cref{section:genericclarke}, we extend this line of research to the case of the stochastic heavy ball method \eqref{eq:HBupdate}, leading to \Cref{th:convergenceclarke}.

It's worth noting that this concern was already highlighted in \cite{ermol1998stochastic} where the authors used the semismooth generalized gradients to study the stochastic subgradient method. However, their solution to avoid the artifacts involved injecting a uniform noise, which may not align with practical scenarios.

\subsection{Semialgebraic, definable sets and functions}
\label{subsection:definablesets}

Considering semialgebraic sets, or more generally \emph{o-minimal structures}, aims to exclude pathological cases that rarely occur in practical situations. Sets that belong to an o-minimal structure $\mathcal{O}$, which is a collection of subsets from all spaces $\R^n$, for $n \in \N$, are called \emph{definable} in $\mathcal{O}$. By extension, functions are called definable in $\mathcal{O}$ if their graphs belong to $\mathcal{O}$.

This subsection serves as a concise presentation of definable sets and their properties. Main o-minimal structures of interest in machine learning applications will be presented, but their comprehensive definitions are omitted here. For more details, we refer the reader to \cite[Subsection 4.1]{bolte2022subgradient} or  \cite[Appendix A.2]{bolte2021nonsmooth} for a more operational view in machine learning. For more complete references, we refer the reader to \cite{coste,vandendries1996}.

Semialgebraic sets are the simplest example of such a structure. Still, it encompasses many functions used in machine learning: ReLU, convolution layers,  max pooling,  the square loss, or the $\ell_1$ regularization. O-minimal structures are generalizations of semialgebraic sets based on their stability properties by simple operations such as  finite unions, intersections or by projection.

Two o-minimal structures are relevant to consider in machine learning applications: \emph{globally subanalytic} sets, also denoted $\Ran$, and $\Ranexp$. The reason to consider them is to include some functions that are recurrently used in machine learning, but that are not semialgebraic, such as the exponential and the logarithm.

Globally subanalytic sets, $\Ran$, is an o-minimal structure containing analytic functions having polynomial growth, and also the restriction of analytic functions to semialgebraic compact sets. In particular, the exponential function defined on $\R$ is not included in this class of function, but its restriction to any compact interval is. The o-minimal structure $\Ranexp$ contains both the globally subanalytic sets and the exponential function.

By using these o-minimal structures, some of the assumptions from \Cref{section:mainresults} may be relaxed:

\begin{itemize}
    \item In \Cref{ass:stochasticconservativegradient}, $f$ and $v$ can be assumed to be definable in the same arbitrary o-minimal structure, for instance, $\Ranexp$.
    \item In \Cref{ass:sard}, if $P$ is finite, one can assume more generally $f$ and $D$ to be definable in the same arbitrary o-minimal structure. But if  $P$ is absolutely continuous, one can only assume $f$, $D$, and the density of $P$ to be at most globally subanalytic.
\end{itemize}

In particular, \Cref{th:convergenceconservative,th:convergenceclarke} hold under these relaxed conditions. Remark that in the case of an absolutely continuous distribution, we can't assume $f$, $D$, and the density of $P$ to be definable in an arbitrary o-minimal structure. Indeed, \Cref{ass:sard} aims to obtain the Sard condition by using a result on stability under integration  \cite{Cluckers_2011} that mostly applies to globally subanalytic functions.

\begin{remark}[On the use of the term ``definable"] With an abuse of terminology, and when there is no confusion, we will often call a set or function ``definable",  without specifying the underlying o-minimal structure which is assumed to be the same for all objects. In the assumptions, the reader can also consider ``definable" to be ``semialgebraic" for simplicity. 
\end{remark}

\paragraph{Stratification of definable sets and consequences.} An important property of definable sets  is the stratification into $C^r$ manifolds:

\begin{theorem}[Stratification of a definable set \cite{vandendries1996}] \label{th:stratification} Let $A \subset \R^p$ be  definable. Then there exists a partition $(M_k)_{k = 1, \ldots, q}$ of $A$ such that for $k = 1, \ldots, q$, $M_k$ is  a $C^r$ manifold  and  for all couple $i,j$, $\overline{M_i} \cap M_j \neq \emptyset \Rightarrow M_j \subset \overline{M_i}$. Such a partition is called a $C^r$-stratification.
\end{theorem}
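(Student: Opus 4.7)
The plan is to derive the statement from the $C^r$-cell decomposition theorem, which is a cornerstone of o-minimal geometry. That theorem asserts that for any finite family $\mathcal{F}$ of definable subsets of $\R^p$, there exists a finite partition of $\R^p$ into definable cells, each one a $C^r$ embedded submanifold of $\R^p$, such that every element of $\mathcal{F}$ is a union of cells. The proof of that auxiliary theorem proceeds by induction on the ambient dimension $p$, invoking the monotonicity theorem in dimension one (decomposing $\R$ into points and open intervals on which any finite list of one-variable definable functions is $C^r$ and monotone), and, in the inductive step, using projections to lift the decomposition from $\R^{p-1}$ while controlling fibers through definable choice. Applied to $\mathcal{F}=\{A\}$, and keeping only those cells contained in $A$, it immediately produces a finite partition $(M_k)_{k=1,\dots,q}$ of $A$ into $C^r$ manifolds.

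What remains is to establish the frontier condition $\overline{M_i}\cap M_j\neq\emptyset \Rightarrow M_j\subset \overline{M_i}$. The inductive construction of cells, as graphs and bands of definable continuous functions over cells of lower dimension, ensures that the topological frontier of any cell is a union of cells of strictly smaller dimension, but only after the decomposition has been refined to be compatible with the relevant closures. To achieve this, I would apply the cell decomposition theorem a second time, now to the enlarged finite family $\{M_k\}\cup\{\overline{M_k}\}_{k=1,\dots,q}$. The resulting refinement has the property that every $\overline{M_i}$ is a union of new pieces, so any new piece either lies entirely inside $\overline{M_i}$ or is disjoint from it. Iterating this refinement is shown to terminate by a dimensional bookkeeping argument based on the o-minimal control of dimensions and the uniform finiteness of definable families, and the resulting partition is the desired $C^r$-stratification.

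The main obstacle is the frontier condition: the existence and $C^r$-smoothness of cells is a routine inductive consequence of the o-minimal axioms, whereas compatibility with closures must be engineered by this additional refinement step, and one must carefully check that no regularity is lost during the process. An equivalent and sometimes more convenient route is to invoke Whitney $(a)$-regular stratifications of definable sets, which are known to exist in any o-minimal structure, but the cell-based argument outlined above already suffices. The entire argument transfers verbatim to any o-minimal structure, in particular to $\Ranexp$ and to globally subanalytic sets, so the theorem holds at the level of generality used throughout the paper.
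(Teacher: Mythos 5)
The paper itself does not prove this statement: it is imported verbatim from van den Dries--Miller \cite{vandendries1996} (their Section 4.8), so your sketch has to stand on its own. The route you choose --- $C^r$ cell decomposition compatible with $\{A\}$, followed by a refinement to enforce the frontier condition --- is indeed the standard one, and the first half (existence of a finite partition of $A$ into definable $C^r$ cells) is unproblematic. The gap is that the second half, which is the actual content of the theorem, is asserted rather than argued. Applying cell decomposition a second time, compatibly with $\{M_k\}\cup\{\overline{M_k}\}$, makes each \emph{old} closure $\overline{M_i}$ a union of new cells, but the frontier condition you must verify concerns the closures of the \emph{new} cells, which are strictly smaller sets and are not unions of cells of that decomposition; so one pass proves nothing by itself. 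You acknowledge this by saying one should iterate, but the iteration as described has no terminating measure: each refinement creates smaller cells with new frontiers, and refinement does not even preserve frontier relations already achieved (if $M_i$ is split into pieces $M_i'$, the required implication $\overline{M_i'}\cap M_j\neq\emptyset \Rightarrow M_j\subset\overline{M_i'}$ is strictly stronger than the one inherited from $M_i$). There is also a circularity your outline hides: the top-dimensional strata must be cut compatibly with the closures of lower-dimensional strata that are only constructed later. For instance, in $\R^3$ take $A=T\cup S$ with $T$ the open unit disk in the plane $z=0$ and $S=\{(0,0,z):0<z<1\}$; a perfectly good cell decomposition compatible with $A$ returns the two cells $T$ and $S$, yet $\overline{S}\cap T\neq\emptyset$ while $T\not\subset\overline{S}$, so $T$ must be re-cut at the origin, and in general such repairs can cascade. ``Dimensional bookkeeping based on uniform finiteness'' is precisely the missing argument, not a proof of it.

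To close the gap you need one of the standard mechanisms. Either organize the repair as an induction in which a definable ``bad locus'' (contained in the union of the frontiers $\partial M_i$, hence of dimension strictly smaller than the strata it affects) is shown to drop in dimension at each stage, so the process stabilizes after at most $p+1$ rounds and the strata of a given dimension are never modified once the locus has passed below that dimension; or follow the proof of \cite{vandendries1996}, which builds the stratification by induction on the ambient dimension, lifting a stratification of $\R^{p-1}$ through the graph/band structure of cells so that the frontier condition is propagated rather than repaired a posteriori (this is also how the semialgebraic case is treated in Bochnak--Coste--Roy, Section 9.1). Your closing remark that one may instead invoke the existence of definable Whitney $(a)$-stratifications is circular in spirit, since that is the very result of \cite{vandendries1996} being quoted here. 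With one of these inductions made explicit (or simply by citing the reference, as the paper does), your argument would be complete; as written, the frontier condition is the step that fails.
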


This can be seen as a partition where each manifold's boundary contains other manifolds of lower dimension. As an illustration, the stratification of the closed cube is given by the open cube, the faces, the edges, and the vertices. 

The stratification property underlines the simple geometric structure of definable sets and it has important consequences in nonsmooth optimization. For instance, if $f : \R^p \rightarrow \R$ is definable, we may apply it to the graph of $f$ in order to have a set $L \subset \R^p$ which is a dense finite union of open manifolds on which $f$ is $C^r$. A refined form of stratification called \emph{Whitney stratification}, see e.g. \cite{coste,vandendries1996}, has two important consequences in nonsmooth nonconvex optimization:

\begin{itemize}
    \item (Path differentiability). Locally Lipschitz and definable functions are path differentiable  \cite[Proposition 2]{bolte2019conservative}.
    \item (Sard condition). A locally Lipschitz and definable function $F$, accompanied with a definable conservative gradient $D_F$, satisfies the Sard condition: the set of $D_F$-critical values, $F(\{w \in \R^p \ : \ 0 \in D_F(w) \})$, has empty interior \cite[Theorem 5]{bolte2019conservative}.
\end{itemize}

Some remarkable consequences regarding the Lebesgue measure of definable sets are worth mentioning for our analysis. For instance, a definable open and dense set is a finite union of open manifolds and has full Lebesgue measure. Definable zero measure sets are necessarily a finite union of low-dimensional manifolds and in dimension one, they are finite. We also have the following result on definable dense products which we use for our avoidance results in \Cref{section:genericclarke}:

\begin{lemma}[Dense product {\cite[Lemma 4.9]{bolte2022subgradient}}] \label{prop:denseproduct} Let $(l, m) \in \N^* \times \N^*$ and $L \subset \R^l \times \R^m$ be definable. Then $L$ is dense in $\R^l \times \R^m$ if and only if there exists a definable dense set  $A \subset \R^l$ such that for all $w \in A$, the set $\{z \in \R^m \ : \ (w,z) \in L\}$ is dense in $\R^m$.
\end{lemma}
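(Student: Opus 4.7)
The reverse implication is routine: given such an $A$ and any nonempty open $U \subset \R^l \times \R^m$, I pick a basic open $U_1 \times U_2 \subset U$ with $U_1 \subset \R^l$ and $U_2 \subset \R^m$ open balls; density of $A$ yields $w \in A \cap U_1$, and density of $\{z : (w,z) \in L\}$ yields $z \in U_2$ with $(w,z) \in L$, producing a point of $L$ inside $U$. For the forward implication I would introduce the candidate
\begin{equation*}
A = \left\{ w \in \R^l \ : \ \{z \in \R^m \ : \ (w, z) \in L\} \text{ is dense in } \R^m \right\}.
\end{equation*}
This set is definable: denseness of the fiber $L_w = \{z : (w,z) \in L\}$ can be written by combining a projection of a definable set (encoding $\exists z$ close to a prescribed $z_0$ with $(w,z) \in L$) with a coprojection (encoding the universal quantifier over $(z_0, \varepsilon) \in \R^m \times \R_{>0}$), both operations preserving definability in the o-minimal structure. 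It remains to prove that $A$ is dense in $\R^l$, which I handle by induction on $m$.

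For the base case $m = 1$, I argue by contradiction: if $A$ is not dense, there is an open ball $B \subset \R^l$ disjoint from $A$, so that for every $w \in B$ the complement $L_w^c \subset \R$ contains a nonempty open interval (as a definable subset of $\R$, $L_w$ is a finite union of points and intervals, and failing to be dense forces its complement to host an interval). I then invoke a cylindrical cell decomposition of $\R^l \times \R$ compatible with $L$, refined so that $B$ contains an open subcell $D$. Above $D$, the decomposition consists of finitely many graphs of continuous definable functions $f_1 < \cdots < f_k : D \to \R$ together with the bands between consecutive graphs and the two unbounded bands, each cell lying entirely inside $L$ or entirely inside $L^c$. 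Graph cells contribute only singletons to the fibers $L_w^c$, so if every such fiber for $w \in D$ must contain an interval, then some band cell above $D$ is forced to sit inside $L^c$. That band is a nonempty definable open subset of $L^c$, contradicting the denseness of $L$.

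For the inductive step $m > 1$, I view $L$ as a subset of $\R^{l+m-1} \times \R$ and apply the base case (with $\R^{l+m-1}$ in place of $\R^l$) to obtain a definable dense $A' \subset \R^l \times \R^{m-1}$ such that for every $(w, z') \in A'$ the fiber $\{t \in \R : (w, z', t) \in L\}$ is dense in $\R$. Since $A'$ is itself definable and dense, the inductive hypothesis applied in dimensions $(l, m-1)$ yields a definable dense $A \subset \R^l$ such that for every $w \in A$, the slice $A'_w = \{z' : (w, z') \in A'\}$ is dense in $\R^{m-1}$. For such a $w$ and any basic open $V_1 \times V_2 \subset \R^{m-1} \times \R$, picking $z' \in A'_w \cap V_1$ and then $t \in V_2$ with $(w, z', t) \in L$ produces $(z', t) \in L_w \cap (V_1 \times V_2)$, so $L_w$ is dense in $\R^m$ as required.

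The main technical obstacle is the base case, where one must extract uniform information (a full band sitting in $L^c$) from pointwise data (each $L_w^c$ merely containing some interval for $w \in B$); this is precisely what cylindrical cell decomposition is designed to provide, and the inductive step above is then a mechanical unpacking of the product structure.
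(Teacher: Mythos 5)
Your proof is correct. Note that the paper itself does not prove this lemma; it imports it verbatim from \cite[Lemma 4.9]{bolte2022subgradient}, so there is no in-paper argument to compare against step by step. Your argument is a valid self-contained proof: the reverse direction is the routine product-of-balls argument; for the forward direction, the canonical set $A=\{w : L_w \text{ dense}\}$ is indeed definable because fiber density is a first-order condition and o-minimal structures are closed under projections and complements; the codimension-one case via cylindrical cell decomposition is sound (for each $w$ in the open cell $D$ the interval in $L_w^c$ must meet one of the finitely many band cells, and since each band cell lies wholly in $L$ or $L^c$, at least one band cell over $D$ lies in $L^c$, giving a nonempty open subset of $L^c$ and contradicting density of $L$); and the induction on $m$ correctly reduces to the base case over the base space $\R^{l+m-1}$ followed by the inductive hypothesis for $(l,m-1)$. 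Two small points worth making explicit if you write this up: the base case must be stated and proved for an arbitrary base dimension (your argument is uniform in $l$, but the inductive step uses it with $l+m-1$ in place of $l$), and in the cell-decomposition step the band lying in $L^c$ may a priori depend on $w$ — it is the finiteness of the cells over $D$, together with the fact that each cell is entirely inside $L$ or $L^c$, that upgrades the pointwise statement to a single band cell contained in $L^c$; you gesture at both, and neither creates a gap.
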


\section{Nonsmooth analysis of stochastic heavy ball}
\label{section:nonsmoothanalysis}

In this part, we show the convergence of the method via differential inclusion approaches \cite{bianchi2021closed,benaim}. The analysis relies on the existence of a Lyapunov function for a limiting differential inclusion. We recall that $D_F = \E_{\xi \sim P}[D(\cdot, \xi)]$ and we consider the differential inclusion
\begin{equation}
    \label{eq:HBdifferentialinclusion}
    \begin{aligned}
        \dot{w} & = -ry\\
        \dot{y} & \in  D_F(w) - y,\\
    \end{aligned}
\end{equation}
and the total energy $E(w,y) := F(w) + \frac{r}{2}\|y\|^2$. $E$ is bounded below by $F^* = \inf_{w \in \R^p} F(w)$. These elements are common to the analysis of the heavy ball method and are recurrent in the literature, see e.g. \cite{gadat2018stochastic} for the smooth setting, \cite{Ruszczynski2020} that takes into account constraints, and \cite{bianchi2021closed} with the Clarke subdifferential oracle. 

We replace the usual gradient notions with the conservative gradient $D_F$ in order to include practical considerations such as the use of sampling with automatic differentiation. We demonstrate that $E$ serves as a Lyapunov function according to \Cref{def:lyapunov}, facilitating direct connections with various differential approaches \cite{bianchi2021closed,benaim} for a unified and comprehensive study.

In order to prevent explosion in finite time, linear growth can be assumed on $H$, as in \cite{benaim,bianchi2021closed}. This condition may seem a bit restrictive in deep learning applications since neural networks functions often have polynomial growth, hence the use of the more general \Cref{ass:stochasticconservativegradient}.3. We will see in the next result that the boundedness of the Lyapunov function can be used to ensure the existence of solutions.

\begin{proposition}[Existence of solution and Lyapunov function] \label{prop:lyapunovfunction} Under   \Cref{ass:stochasticconservativegradient}, the differential inclusion \eqref{eq:HBdifferentialinclusion} admits solutions and $E$ is a Lyapunov function for \eqref{eq:HBdifferentialinclusion} and $\Lambda : =\{x \in \R^p \ : \ 0 \in D_F(x) \} \times \left\{0\right\}$.
\end{proposition}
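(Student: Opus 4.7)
The plan is to first verify the right-hand side of \eqref{eq:HBdifferentialinclusion} is a well-behaved set-valued field, then to compute $\dot{E}$ along a candidate solution via the chain rule for conservative gradients, and finally to use the resulting identity both to derive global existence (ruling out finite-time explosion) and to check the two strict/weak inequalities required by \Cref{def:lyapunov}.

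First I would set $H(w,y) := (-ry,\; D_F(w)-y)$ and observe that, by \Cref{th:integralconservative}, $D_F$ is a conservative gradient for $F$, hence has closed graph, is nonempty compact convex valued, and is locally bounded. These properties transfer to $H$, so $H$ is upper semicontinuous with nonempty compact convex values. Standard differential inclusion theory (see \cite{aubin,Filippov1988DifferentialEW}) then gives local existence of absolutely continuous solutions through every initial point.

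The key computation is as follows. For an absolutely continuous solution $(w,y):[0,T)\to\R^p\times\R^p$ there exists a measurable selection $v(t)\in D_F(w(t))$ with $\dot{y}(t)=v(t)-y(t)$ for almost every $t$. Since $F$ is path differentiable with conservative gradient $D_F$ (\Cref{th:integralconservative}), \Cref{def:conservativegradient} applied to the absolutely continuous curve $w(\cdot)$ yields, for a.e. $t$,
\begin{equation*}
\frac{\di}{\di t} F(w(t)) \;=\; \langle v(t),\dot{w}(t)\rangle \;=\; -r\,\langle v(t),y(t)\rangle.
\end{equation*}
Differentiating the quadratic term,
\begin{equation*}
\frac{\di}{\di t}\tfrac{r}{2}\|y(t)\|^2 \;=\; r\langle y(t),v(t)-y(t)\rangle \;=\; r\langle v(t),y(t)\rangle - r\|y(t)\|^2,
\end{equation*}
so the cross terms cancel and
\begin{equation*}
\frac{\di}{\di t} E(w(t),y(t)) \;=\; -r\|y(t)\|^2 \;\leq\; 0.
\end{equation*}
For global existence, suppose $(w,y)$ is maximal on $[0,T)$ with $T<\infty$. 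Integrating the identity above, $E(w(t),y(t))\leq E(w(0),y(0))$, and since $F\geq F^*$ we obtain $\|y(t)\|^2 \leq (2/r)(E(w(0),y(0))-F^*)$, hence $y$ is bounded. Then $\|w(t)-w(0)\|\leq r\int_0^t\|y(s)\|\di s$ is also bounded on $[0,T)$, so $(w(t),y(t))$ stays in a compact set. Local existence applied at an accumulation point contradicts maximality, forcing $T=\infty$.

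Finally I would verify the Lyapunov inequalities. If $(w^*,y^*)\in\Lambda$, i.e. $y^*=0$ and $0\in D_F(w^*)$, the Lyapunov function is nonincreasing along any solution starting there by the identity above. If $(w^*,y^*)\notin\Lambda$, I split cases. When $y^*\neq 0$, continuity forces $\|y(s)\|>0$ on a small interval, so $\int_0^t\|y(s)\|^2\di s>0$ and $E$ strictly decreases for every $t>0$. When $y^*=0$ but $0\notin D_F(w^*)$, I argue by contradiction: if $E(w(t),y(t))=E(w^*,0)$ for some $t>0$, then $y\equiv 0$ on $[0,t]$, whence $\dot{w}\equiv 0$ and $w\equiv w^*$ on $[0,t]$, and $\dot{y}(s)\in D_F(w^*)$ for a.e. $s\in[0,t]$ with $\dot{y}=0$ a.e., giving $0\in D_F(w^*)$, a contradiction. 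This last case is the main subtlety because one cannot appeal to continuity of $y$ alone; it is resolved cleanly using the closed-graph structure of $D_F$ through the differential inclusion itself rather than any quantitative estimate on $\|D_F(w)\|$. This establishes both required inequalities of \Cref{def:lyapunov} and completes the proof.
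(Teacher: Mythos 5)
Your proof is correct, and the Lyapunov part (the dissipation identity $\frac{\di}{\di t}E(w(t),y(t))=-r\|y(t)\|^2$ obtained from the conservative chain rule, plus the three-case check against \Cref{def:lyapunov}) is essentially the argument of the paper; in the case $y^*=0$, $0\notin D_F(w^*)$ your version is even slightly more self-contained, since $w\equiv w^*$ on $[0,t]$ gives $0\in D_F(w^*)$ directly, whereas the paper invokes the closed graph of $D_F$ (despite your remark, your argument does not actually need it). Where you genuinely diverge is the global-existence step: the paper rules out finite-time explosion by bounding the curve lengths $\int_0^T\|\dot w\|$ and $\int_0^T\|\dot y\|$, the latter requiring the growth bound $\|D(w,s)\|\le\kappa(s)\psi(\|w\|)$ of \Cref{ass:stochasticconservativegradient}.3 together with the bound on $\|w\|$, and then cites Filippov's extension criterion; you instead extract a pointwise bound $\|y(t)\|^2\le\tfrac{2}{r}(E(w(0),y(0))-F^*)$ directly from the energy decrease and $F\ge F^*$, deduce that $w$ stays bounded on any finite horizon, and conclude by the standard ``maximal solution trapped in a compact set extends'' argument. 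This buys you a cleaner estimate that only uses the local boundedness inherent to conservative gradients rather than the quantitative growth hypothesis, at the price of one step you state too tersely: to continue past a finite maximal time $T$ you need $(w,y)$ to have a limit at $T^-$, not merely an accumulation point; this follows because on the compact set containing the trajectory the right-hand side is bounded (local boundedness of $D_F$ plus boundedness of $y$), so the solution is Lipschitz on $[0,T)$ and converges, after which local existence from the limit point contradicts maximality. With that sentence added, your route is complete and arguably simpler than the paper's for the existence half.
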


\begin{proof}
\emph{Existence of solution.} Since $D_F$ is a conservative gradient by \Cref{th:integralconservative}, it is upper semicontinuous, hence the right-hand side in  \eqref{eq:HBdifferentialinclusion} is upper semicontinuous and \eqref{eq:HBdifferentialinclusion}  admits a local absolutely continuous solution $(w,y)$ on $[0,T]$ for some $T > 0$. In order to show it can be extended to a solution on $\R_+$, it is sufficient to show an explosion in finite time of the lengths of the curves $\int_{0}^T \|\dot{w}(t)\| \di t$ and  $\int_{0}^T \|\dot{y}(t)\| \di t$ cannot happen, see for instance  \cite[Theorem 2]{Filippov1988DifferentialEW}. By the definition of a conservative gradient, we have for almost all $t \in [0,T]$,
\begin{equation*}
    \frac{\di (E \circ (w,y))}{\di t} (t) = \langle D_F(w(t)), \dot{w}(t) \rangle + r \langle y(t), \dot{y}(t)\rangle.
\end{equation*}
Integrating from $0$ to $T$ gives
\begin{align}
    \label{eq:lyapunov}
    E(w(T), y(T)) - E(w(0), y(0)) & = \int_{0}^T \frac{\di (E \circ (w,y))}{\di t} (t) \di t \nonumber \\ & =   \int_0^T \left(\langle D_F(w(t)), \dot{w}(t) \rangle + r \langle y(t), \dot{y}(t)\rangle \right) \di t \nonumber \\
    & = \int_{0}^T \langle \dot{y}(t) + y(t), - r y(t) \rangle + r \langle y(t), \dot{y}(t) \rangle \di t \nonumber \\
    & = - \int_{0}^T r \|y(t)\|^2 \di t.
\end{align}
Hence for the component $w$, we have
\begin{align}
    \label{eq:curvelength_w}
    \int_{0}^T \|\dot{w}(t)\| \di t = \int_{0}^T r\|y(t)\| \di t & \leq  \int_{0}^T r(1 + \|y(t)\|^2) \di t \nonumber \\ & = rT + E(w(0), y(0)) - E(w(T), y(T)) \nonumber \\ & \leq rT + E(w(0), y(0)) - F^*,
\end{align}
which holds for any horizon $T > 0$. As to the component $y$, we have
\begin{align*}
    \int_0^T \|\dot{y}(t)\| \di t & \leq \int_0^T (\|D_F(w(t))\| + \|y(t)\|) \di t \\ 
    & =  \int_0^T (\|D_F(w(t))\| + \frac{1}{r}\| \dot{w}(t) \|) \di t \\ 
    & \leq \int_0^T \|D_F(w(t))\| \di t + T + \frac{1}{r}(E(w(0), y(0)) - F^*).
\end{align*}
Let $\kappa$ and $\psi$ be given by \Cref{ass:stochasticconservativegradient}.3, then $$\| D_F(w(t)) \| \leq \E_{\xi \sim}[\kappa(\xi)] \psi(\|w(t)\|) \leq \E_{\xi \sim}[\kappa(\xi)] \sup_{t \in [0,T]} \psi(\|w(t)\|)$$ 
for all $t \in [0,T]$. Finally,
\begin{equation}
    \label{eq:curve_length_y}
    \int_0^T \|\dot{y}(t)\| \di t \leq T\E_{\xi \sim P}[\kappa(\xi)] \sup_{s \in [0,T]} \psi(\|w(s)\|) + T + \frac{1}{r}(E(w(0), y(0)) - F^*).
\end{equation}
Since $\psi$ is locally bounded and by \eqref{eq:curvelength_w}, $\|w(s)\| \leq \|w_0\| + rs + E(w(0), y(0)) - F^*$ for all $s \in [0, T]$, \eqref{eq:curve_length_y} holds for any horizon $T >0$. Finally, the local solution $(w,y)$ can be extended to a global solution on $\R_+$.

\emph{Lyapunov function.} We now verify that $E$ is a Lyapunov function in the sense of \Cref{def:lyapunov}. Let $(w,y)$ be an absolutely continuous solution of \eqref{eq:HBdifferentialinclusion} with $(w(0), y(0)) \in\Lambda$. In this case, the equation \eqref{eq:lyapunov}, holds for $T > 0$, hence 
 $E(w(T), y(T)) - E(w(0), y(0)) \leq 0$.

Now suppose $(w(0), y(0))\notin\Lambda$. If $y(0) \neq 0$, we have $E(w(t), y(t)) < E(w(0), y(0))$ from  \eqref{eq:lyapunov}, by continuity of $y$. If $0 \notin D_F(w(0))$, suppose toward a contradiction that there exists $t > 0$ such that $E(w(t), y(t))=E(w(0), y(0))$. It means by \eqref{eq:lyapunov} that $y(s) = 0$ for all $s \in [0, t]$, hence $\dot{y}(s) = 0$  and then $0 \in D_F(w(s))$ for all $s \in [0, t]$. Since $D_F$ has a closed graph, we would have $0 \in D(w(0))$ which is a contradiction.

Finally, for $(w(0), y(0))\notin\Lambda$, one has for almost all $t > 0$ , $ E(w(t), y(t)) < E(w(0), y(0))$.
\end{proof}

\bigskip

For the sequence $(w_k, y_k)_{k \in \N}$ defined by \eqref{eq:HBupdate}, we denote for all $k \in \N^*$, $V(w_{k}) = \E_{\xi \sim P}[v(w_{k}, \xi)]$ and $u_{k} = v(w_{k}, \xi_{k}) - V(w_{k})$. With these notations, the second equation in \eqref{eq:HBupdate}  writes

\begin{equation}
\label{eq:update_y_forproof}
    y_{k+1} = (1 - \beta_k) y_k + \beta_k V(w_{k+1}) + \beta_k u_{k+1}.
\end{equation}

\begin{remark}[Clarke subgradient sequences] \label{remark:replacebyclarke} Whenever almost surely $V(w_k) \in \partial^c F(w_k)$ for all $k \in \N$, all the results of this part also hold replacing $D_F$ by $\partial^c F$ in the differential inclusion \eqref{eq:HBdifferentialinclusion}.
    
\end{remark}

We show that the sequence $(w_k, y_k)_{k \in \N}$ is a \emph{perturbed solution} (see \cite[Definition II]{benaim}) of the differential inclusion \eqref{eq:HBdifferentialinclusion}. We first prove preliminary lemmas.

\begin{lemma}[Noise extinction] \label{lem:noiseextinction} Under \Cref{ass:stochasticalgorithm}, \Cref{ass:stochasticconservativegradient}, 
\begin{enumerate}
    \item $\sup_{k \in \N} \E[\|u_{k+1}\|^2 | \xi_k, \ldots, \xi_0] < \infty$ almost surely,
    \item $\sum_{i = 0}^\infty \beta_i u_{i+1}$ converges almost surely.
\end{enumerate}
\end{lemma}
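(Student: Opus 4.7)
The plan is to combine the almost sure boundedness of the iterates (\Cref{ass:stochasticalgorithm}.1) with the integrable-envelope bound on $D$ (\Cref{ass:stochasticconservativegradient}.3) to control each $u_{k+1}$, and then invoke a standard $L^2$ martingale argument for Part 2.

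For Part 1, let $\mathcal{F}_k = \sigma(\xi_0, \ldots, \xi_k)$. I would first note that $w_{k+1}$ is $\mathcal{F}_k$-measurable (since $w_{k+1}$ is determined by $\xi_0, \ldots, \xi_k$ through the recursion \eqref{eq:HBupdate}), while $\xi_{k+1}$ is independent of $\mathcal{F}_k$. Hence the conditional expectation integrates out only the $\xi_{k+1}$ variable, giving
\[
\E[\|u_{k+1}\|^2 \mid \mathcal{F}_k] = \int_S \|v(w_{k+1}, s) - V(w_{k+1})\|^2 \, dP(s) \leq \int_S \|v(w_{k+1}, s)\|^2 \, dP(s),
\]
using that centering by the mean only reduces the second moment. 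Since $v(\cdot, s) \in D(\cdot, s)$, the envelope in \Cref{ass:stochasticconservativegradient}.3 yields $\|v(w_{k+1},s)\| \leq \kappa(s)\psi(\|w_{k+1}\|)$, so the right-hand side is bounded by $\psi(\|w_{k+1}\|)^2 \E[\kappa(\xi)^2]$. Then \Cref{ass:stochasticalgorithm}.1 gives $\sup_k \|w_k\| < \infty$ almost surely, and local boundedness of $\psi$ combined with square-integrability of $\kappa$ concludes.

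For Part 2, set $M_n := \sum_{i=0}^n \beta_i u_{i+1}$. The key observation is that $u_{i+1}$ is a conditional martingale difference: $v(w_{i+1}, \xi_{i+1})$ is integrable (same envelope argument) and $\E[v(w_{i+1}, \xi_{i+1}) \mid \mathcal{F}_i] = V(w_{i+1})$, so $\E[u_{i+1} \mid \mathcal{F}_i] = 0$. Thus $(M_n)$ is a martingale with respect to $(\mathcal{F}_{n+1})$. Its predictable quadratic variation satisfies
\[
\sum_{i=0}^\infty \beta_i^2 \, \E[\|u_{i+1}\|^2 \mid \mathcal{F}_i] \leq C \sum_{i=0}^\infty \beta_i^2,
\]
where $C$ is the almost sure bound from Part 1. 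Since $\alpha_k/\beta_k \to r > 0$ by \Cref{ass:stochasticalgorithm}.3, we have $\beta_k \sim \alpha_k/r$, and hence $\sum \beta_k^2 < \infty$ follows from $\sum \alpha_k^2 < \infty$ in \Cref{ass:stochasticalgorithm}.2. So the predictable quadratic variation is almost surely finite, and the $L^2$ martingale convergence theorem (applied pathwise, or its conditional version for martingales with summable conditional second moments of increments) yields almost sure convergence of $M_n$.

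The main obstacle, if any, is the minor bookkeeping around measurability and independence in Part 1 (ensuring that $w_{k+1}$ is really $\mathcal{F}_k$-measurable given the one-step lag in \eqref{eq:HBupdate}, and that conditioning reduces to integrating the $\xi_{k+1}$ slice against $P$); once that is in place, both parts reduce to standard stochastic approximation estimates. No deeper structural property of conservative gradients is needed here beyond the envelope bound and joint measurability from \Cref{ass:stochasticconservativegradient}.
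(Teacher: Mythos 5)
Your proposal is correct and follows essentially the same route as the paper: item 1 from the envelope bound of \Cref{ass:stochasticconservativegradient}.3 together with almost sure boundedness of the iterates, and item 2 from the square integrable martingale convergence theorem with $\sum_k \beta_k^2 < \infty$ deduced from $\alpha_k/\beta_k \to r$ and $\sum_k \alpha_k^2 < \infty$ (the paper simply delegates these details to the proof of Lemma 3.14 in the cited reference). Your added bookkeeping on the measurability of $w_{k+1}$ with respect to $\sigma(\xi_0,\ldots,\xi_k)$ and the independence of $\xi_{k+1}$ is exactly the right justification and introduces no gap.
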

\begin{proof} Item 1 is a consequence of \Cref{ass:stochasticconservativegradient}.3. Item 2 is an application of square integrable martingale convergence theorem, see for instance the proof of item 2 of \cite[Lemma 3.14]{bolte2022subgradient}.
\end{proof}

The following lemma appears as an assumption in \cite{bianchi2021closed}. Similarly to \cite[Lemma 1]{Ruszczynski2020}, we show it is a consequence of the boundedness of $(w_k)_{k \in \N}$.
\begin{lemma}[Velocity boundedness] \label{lem:velocityboundedness} Let $(w_k, y_k)_{k \in \N}$ be defined by \eqref{eq:HBupdate}. Under \Cref{ass:stochasticalgorithm} and \Cref{ass:stochasticconservativegradient}, $(y_k)_{k \in \N}$ is bounded almost surely.
\end{lemma}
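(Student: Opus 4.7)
The plan is to decompose $y_k$ into a pathwise bounded piece and a residual whose squared norm is controlled by the Robbins--Siegmund theorem. Using the form \eqref{eq:update_y_forproof}, write
$$y_{k+1} = (1-\beta_k)\, y_k + \beta_k\, V(w_{k+1}) + \beta_k\, u_{k+1},$$
and introduce the auxiliary noiseless iterate $\hat y_{k+1} := (1-\beta_k)\, \hat y_k + \beta_k\, V(w_{k+1})$ with $\hat y_0 := y_0$, so that the residual $e_k := y_k - \hat y_k$ satisfies $e_0 = 0$ and
$$e_{k+1} = (1-\beta_k)\, e_k + \beta_k\, u_{k+1}.$$

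First, I would control $\hat y_k$ pathwise. By \Cref{ass:stochasticalgorithm}.1, the random scalar $M := \sup_k \|w_k\|$ is almost surely finite; combined with \Cref{ass:stochasticconservativegradient}.3 and the monotonicity of $\psi$, this yields $\|V(w_{k+1})\| \leq \E[\kappa(\xi)]\, \psi(M)$ for every $k$, almost surely. Since $\beta_k \in (0,1)$, the iterate $\hat y_{k+1}$ is a convex combination of $\hat y_k$ and $V(w_{k+1})$, and a straightforward induction gives $\sup_k \|\hat y_k\| \leq \max(\|y_0\|, \E[\kappa(\xi)]\, \psi(M)) < \infty$ almost surely. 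Next, for the residual $e_k$, the zero conditional mean of $u_{k+1}$ given $\mathcal F_k := \sigma(\xi_0, \ldots, \xi_k)$ yields
$$\E[\|e_{k+1}\|^2 \mid \mathcal F_k] = (1-\beta_k)^2 \|e_k\|^2 + \beta_k^2\, \E[\|u_{k+1}\|^2 \mid \mathcal F_k].$$
By \Cref{lem:noiseextinction}.1, the factor $\E[\|u_{k+1}\|^2 \mid \mathcal F_k]$ is dominated in $k$ by an almost surely finite random constant; and $\sum_k \beta_k^2 < \infty$ since \Cref{ass:stochasticalgorithm}.3 gives $\beta_k \sim \alpha_k/r$, while \Cref{ass:stochasticalgorithm}.2 gives $\sum_k \alpha_k^2 < \infty$. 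Applying the Robbins--Siegmund theorem pathwise, $\|e_k\|^2$ converges almost surely to a finite limit, whence $\sup_k \|e_k\| < \infty$ almost surely. Together with the bound on $\hat y_k$, this gives $\sup_k \|y_k\| \leq \sup_k \|\hat y_k\| + \sup_k \|e_k\| < \infty$ almost surely.

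The principal subtlety is that, under \Cref{ass:stochasticconservativegradient}.3, the noise magnitude $\kappa(\xi)$ is merely square-integrable rather than essentially bounded, so no pathwise bound on $\|v(w_{k+1}, \xi_{k+1})\|$ holds uniformly in $k$ along a trajectory of bounded $(w_k)$. A naive envelope argument on $\|y_{k+1}\| \leq (1-\beta_k)\|y_k\| + \beta_k \|v(w_{k+1}, \xi_{k+1})\|$ is therefore insufficient. The decomposition $y_k = \hat y_k + e_k$ isolates the only-$L^2$-controlled randomness inside $e_k$, where the averaging effect of the exponential memory (quantitatively, $\sum_k \beta_k^2 < \infty$) is precisely what Robbins--Siegmund converts into an almost sure bound.
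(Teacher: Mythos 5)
Your proof is correct, and it handles the noise by a genuinely different route than the paper. The paper corrects the iterate by the tail of the noise series, setting $\tilde y_k := y_k + \sum_{i=k}^\infty \beta_i u_{i+1}$, which is well defined because \Cref{lem:noiseextinction}.2 already gives almost sure convergence of $\sum_i \beta_i u_{i+1}$; the corrected sequence then satisfies a near convex-combination recursion with a vanishing perturbation $\tilde y_k - y_k \to 0$, and a pathwise max/envelope induction concludes. You instead split $y_k = \hat y_k + e_k$ into the noiseless convex-combination recursion (bounded exactly as the paper bounds the $V(w_{k+1})$ terms, via \Cref{ass:stochasticalgorithm}.1 and \Cref{ass:stochasticconservativegradient}.3) and a purely noise-driven part $e_k$, which you control through the conditional-variance recursion and Robbins--Siegmund, using \Cref{lem:noiseextinction}.1 together with $\sum_k \beta_k^2 < \infty$ (correctly extracted from \Cref{ass:stochasticalgorithm}.2--3). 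The two arguments rest on the same underlying ingredients---square-summability of $\beta_k$ and the conditional second-moment bound---but the paper's tail-correction trick reuses the already-established series convergence of \Cref{lem:noiseextinction}.2 and needs no further probabilistic tool, whereas yours bypasses \Cref{lem:noiseextinction}.2 entirely at the cost of invoking Robbins--Siegmund (a supermartingale convergence theorem, so ``pathwise'' is a slight misnomer, though the application is valid since $\beta_k^2\,\E[\|u_{k+1}\|^2 \mid \mathcal{F}_k]$ is $\mathcal{F}_k$-measurable and almost surely summable). Your closing observation about why a naive envelope bound on $\|v(w_{k+1},\xi_{k+1})\|$ fails when $\kappa$ is only square-integrable is exactly the obstruction that both arguments are designed to circumvent.
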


\begin{proof}
     For all $k \in \N$, we define the quantity $\Tilde{y}_k := y_k + \sum_{i = k}^\infty \beta_i u_{i+1}$. Adding $\sum_{i = k+1}^\infty \beta_i u_{i+1}$ on both sides in \eqref{eq:update_y_forproof} gives 
    \begin{equation*}
        \Tilde{y}_{k+1} = (1 - \beta_k) \Tilde{y}_k + \beta_k( V(w_{k+1}) +  \Tilde{y}_k - y_k),
    \end{equation*}
    hence we have  $\|\Tilde{y}_{k+1}\| \leq \max  \{\|\Tilde{y}_k\|,  \| V(w_{k+1})\| + \| \Tilde{y}_k - y_k\|\}$ for all $k$. By direct recurrence we have $\|\Tilde{y}_{k+1}\| \leq \max \{\|\Tilde{y}_0\|, \| V(w_{i+1})\| + \| \Tilde{y}_i - y_i\|,  i = 0, \ldots , k\}$. Furthermore, we have almost surely $$\sup_{k \in \N} \| V(w_{k+1})\| <  \E_{\xi \sim P}[\kappa(\xi)]\sup_{k \in \N} \psi (\|w_{k+1}\|) < \infty$$ by \Cref{ass:stochasticalgorithm}.1 and \Cref{ass:stochasticconservativegradient}.3. By \Cref{lem:noiseextinction}.2, the quantity $\| \Tilde{y}_k - y_k\| = \|\sum^{\infty}_{i=k} \beta_i u_{i+1}\|$ goes to $0$ as $k \to \infty$. Finally, we have 
    $$\limsup_{k \to \infty} \|y_k\| \leq \max \{ \|\Tilde{y}_0\|, \sup_{k \in \N} \left( \| V(w_{k+1})\| + \| \Tilde{y}_k - y_k\| \right) \}< \infty.$$ 
\end{proof}

We are now ready to prove \Cref{th:convergenceconservative}.

\begin{proof}[ of \Cref{th:convergenceconservative}]
    
Let $(w_k, y_k)_{k \in \N}$ be defined by \eqref{eq:HBupdate}. In order to apply the setting from \Cref{subsection:setvalued_conservative} and the frameworks \cite{benaim,bianchi2021closed}, we first verify that the sequence satisfies a recursion of the form:

\begin{align}
    \label{eq:desired_update}
    w_{k+1} &= w_k - \alpha_k y_{k} \nonumber \\
    y_{k+1} &\in y_k + \frac{\alpha_k}{r}  ( D_F^{\delta_{k}}(w_{k}) -  y_k + u_{k+1}),
\end{align}
where almost surely, $\delta_k \to 0$ as $k \to \infty$.

For clarity, we may assume $r \beta_k = \alpha_k$.  We have for all $k \in \N$, $D_F(w_{k+1}) = D_F(w_k - \alpha_k y_k) \subset D_F^{\alpha_k \|y_k\|}(w_k)$.  By \Cref{ass:stochasticalgorithm}.2 and \Cref{lem:velocityboundedness}, $\alpha_k \|y_k\|$ goes to $0$ as $k \to \infty$, hence \eqref{eq:update_y_forproof} writes as the desired recursion \eqref{eq:desired_update} with $\delta_k := \alpha_k \|y_k\|$.

\bigskip

We  are now ready to prove points 1, 2, and 3 of \Cref{th:convergenceconservative}:

We first prove \Cref{th:convergenceconservative}.1. By \Cref{prop:lyapunovfunction}, $E : (w, y) \mapsto F(w) + \frac{r}{2} \|y\|^2$ is a Lyapunov function for  \eqref{eq:HBdifferentialinclusion} and the set $\Lambda = \{x \in \R^p \ : \ 0 \in D_F(x)\} \times \{0\}$.  All the conditions are satisfied to apply  \cite[Corollary 4.9]{bianchi2021closed}, in particular \cite[Assumption 4.1]{bianchi2021closed} is satisfied since $D_F$ is a conservative gradient, \cite[Assumption 4.2]{bianchi2021closed} is satisfied by \Cref{lem:noiseextinction} and \cite[Assumption 4.3]{bianchi2021closed} is satisfied by \Cref{ass:stochasticalgorithm}. Finally, every essential accumulation point $(w^*, y^*)$ of $(w_k, y_k)_{k \in \N}$ satisfies $0 \in D_F(w^*)$ and $y^* = 0$.

We then prove \Cref{th:convergenceconservative}.2. By \Cref{lem:noiseextinction}, $\sum_{i = 0}^{k} \beta_k u_{k+1}$ converges as $k \to \infty$ and we can apply \cite[Proposition 1.3]{benaim} to say the interpolated process associated to $(w_k, y_k)_{k \in \N}$ is almost surely a perturbed solution  of \eqref{eq:HBdifferentialinclusion}, see \Cref{def:perturbedsol}. We then follow the beginning of the proof of \cite[Proposition 3.27]{benaim}. The accumulation points of $(w_k, y_k)_{k \in \N}$, which we denote $L$, is a compact set. $L$ is furthermore invariant by the flow of \eqref{eq:HBdifferentialinclusion}, by \cite[Theorem 3.6]{benaim} and \cite[Lemma 3.5]{benaim}. Let $(w^*, y^*)$ be such that $E(w^*, y^*) = \min E(L)$. In particular,  $E(w^*, y^*) = \underset{k \to \infty}{\liminf}{\: F(w_k) + \frac{r}{2} \|y_k\|^2}$. Then if $(w,y)$ is a solution of \eqref{eq:HBdifferentialinclusion} starting from $(w^*, y^*)$, one has for all $t > 0$, $(w(t), y(t)) \in L$ and also, $E(w(t), y(t)) \geq E(w^*, y^*)$ by definition of $(w^*, y^*)$. Since $E$ is a Lyapunov function (\Cref{prop:lyapunovfunction}) for the set $\Lambda$, then $(w^*, y^*)$ has to be in $\Lambda$.

    We now prove \Cref{th:convergenceconservative}.3. Under \Cref{ass:sard}, $F = \E_{\xi \sim P}[f(\cdot, \xi)]$ and the set-valued map $D_F = \E_{\xi \sim P}[D(\cdot, \xi)]$ are definable. Indeed, for the discrete case, this is a consequence of the stability of semialgebraic functions by finite sum. In the case where $P$ has a semialgebraic density with respect to Lebesgue, this is a consequence of an integration theorem of globally subanalytic functions and set-valued maps, see \cite[Theorem 1.3]{Cluckers_2011} and \cite[Theorem 4.8]{bolte2022subgradient}. We may apply definable Sard's theorem for conservative gradients \cite[Theorem 5]{bolte2019conservative} to obtain that the set of $D_F$-critical values, $F(\{w \in \R^p \ : \ 0 \in D_F(w) \})$, has empty interior.

    We can  finally combine \cite[Theorem 3.6]{benaim} and  \cite[Proposition 3.27]{benaim} to say that every accumulation point $(w^*, y^*)$  of $(w_k, y_k)_{k \in \N}$ satisfies $0 \in D_F(w^*)$ and $y^* = 0$, and $E(w_k,y_k) = F(w_k) + \frac{r}{2}\|y_k\|^2$ converges to $F(w^*) +\frac{r}{2}\|y^*\|^2$, hence $F(w_k)$ converges as $k \to \infty$. 
\end{proof}

\section{Avoidance of nonsmooth artifacts}
\label{section:genericclarke}

In this part, we show that randomizing the initialization is sufficient to avoid all nonsmooth artifacts: nondifferentiability points and artifacts created by the conservative calculus, discussed in \Cref{subsection:setvalued_conservative}. Our results rely on the stratification property of definable sets and functions highlighted in \Cref{subsection:definablesets}.

We consider the second order recursion \eqref{eq:HBoneequation}, with stepsizes $(\mu_k)_{k \in \N}, (\nu_k)_{k \in \N}$:
\begin{equation*}
    w_{k+1} = w_k - \mu_k v(w_k, \xi_k) +  \nu_k(w_k - w_{k-1}),
\end{equation*}
where we recall $\mu_k = \alpha_k \beta_{k-1}$ $\nu_k = \alpha_k(1 - \beta_{k-1})/\alpha_{k-1}$, and $y_0 = \frac{w_0 - w_1}{\alpha_0}$.

We can write it as follows:
\begin{equation*}
    \begin{pmatrix}
    w_{k+1} \\
    w_{k}
    \end{pmatrix} = \begin{pmatrix}
        w_k - \mu_k v(w_k, \xi_k) + \nu_k (w_k - w_{k-1}) \\
        w_{k}
    \end{pmatrix} = \Psi_{\mu_k, \nu_k, \xi_k}(w_k, w_{k-1}),
\end{equation*}
where for $s \in S$ and $(x, y) \in \R^p \times \R^p$, we let $\Psi_{\mu, \nu,s}(x,y) := (x - \mu v(x, s) + \nu (x -y), x)$. With an abuse of notation and for simplicity, we will write for a sequence $(s_k)_{k \in \N}$, for all $k \in \N$, $\Psi_{\mu_k,\nu_k, s_k} = \Psi_{s_k}$. In this case, for all $k \in \N^*$, $(w_{k+1}, w_k) = (\Psi_{s_k} \circ \Psi_{s_{k-1}} \circ \ldots \circ \Psi_{s_{1}})(w_1, w_0)$.

\bigskip 

In the proofs to come in this section, we will use the following notations:

\begin{itemize}
    \item We denote $\Sigma \subset S$ of full measure given by \Cref{ass:stochasticconservativegradient}.1-3, which is such that for $s \in \Sigma$, $f(\cdot, s)$ and $v(\cdot, s)$ are semialgebraic, and $\|D(w,s)\| \leq \kappa(s) \psi(\|w\|)$ for all $w\in \R^p$.
    \item For all $s \in \Sigma$, we denote $R_s \subset \R^p$, given by \Cref{ass:stochasticconservativegradient}.1-2, the semialgebraic and dense subset such that for all $w \in R_s$, $\nabla_w f(\cdot, s) = v(\cdot, s)$, and $f(\cdot, s)$ is $C^2$ on a neighborhood of $w$. Such a subset is given by the property of a conservative gradient to be gradient almost everywhere, see \cite[Theorem 1]{bolte2019conservative}, and the stratification property of semialgebraic sets (\Cref{th:stratification}).
\end{itemize}

We start by proving the following lemma:

\begin{lemma} \label{lem:definablediffeo} Under \Cref{ass:stochasticconservativegradient}, for $s \in \Sigma$ and $\nu \neq 0$, one has for all semialgebraic subset $Z \subset \R^{p} \times \R^p$,
\begin{equation*}
    \dim Z \leq 2p -1 \Rightarrow \dim \Psi_{\mu, \nu, s}^{-1}(Z) \leq 2p -1. 
\end{equation*}
\end{lemma}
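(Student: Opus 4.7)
The plan is to show that $\Psi_{\mu, \nu, s}$ is in fact a semialgebraic bijection of $\R^{2p}$ onto itself (for $s \in \Sigma$ and $\nu \neq 0$) with an explicit semialgebraic inverse, after which the conclusion follows from the classical fact that semialgebraic maps do not increase the dimension of semialgebraic sets.

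First, I would exhibit the inverse. Given $(u, w) \in \R^p \times \R^p$, the system $(u,w) = \Psi_{\mu,\nu,s}(x,y) = \bigl(x - \mu v(x,s) + \nu(x - y),\, x\bigr)$ forces $x = w$ from the second coordinate, and then, because $\nu \neq 0$, the first coordinate can be solved explicitly for $y$:
\begin{equation*}
y \;=\; \frac{(1+\nu)w - u - \mu v(w, s)}{\nu}.
\end{equation*}
Thus $\Psi_{\mu,\nu,s}$ is a bijection of $\R^{2p}$ with
\begin{equation*}
\Psi_{\mu,\nu,s}^{-1}(u,w) \;=\; \Bigl(w,\; \tfrac{(1+\nu)w - u - \mu v(w, s)}{\nu}\Bigr).
\end{equation*}

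Next, I would invoke the definition of $\Sigma$ to note that $v(\cdot, s)$ is semialgebraic for every $s \in \Sigma$. Consequently the map $\Psi_{\mu,\nu,s}^{-1}$ displayed above is a semialgebraic map from $\R^{2p}$ to $\R^{2p}$: its graph is cut out by semialgebraic equalities involving $v(\cdot, s)$ and the fixed scalars $\mu, \nu$. In particular, $\Psi_{\mu,\nu,s}^{-1}(Z)$ is semialgebraic whenever $Z$ is.

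Finally, I would apply the standard dimension-monotonicity for semialgebraic maps (see e.g.\ \cite{coste}): if $h : \R^{2p} \to \R^{2p}$ is semialgebraic and $Z \subset \R^{2p}$ is semialgebraic, then $\dim h(Z) \leq \dim Z$. Taking $h = \Psi_{\mu,\nu,s}^{-1}$ and using that $\Psi_{\mu,\nu,s}^{-1}(Z)$ coincides with $h(Z)$ since $\Psi_{\mu,\nu,s}$ is a bijection, we obtain $\dim \Psi_{\mu,\nu,s}^{-1}(Z) \leq \dim Z \leq 2p - 1$, which is the claim. The only conceptual point one must be careful about is that semialgebraicity of the inverse requires no regularity of $v(\cdot, s)$ beyond semialgebraicity itself; smoothness or continuity play no role here, so the nonsmooth oracle $v$ causes no obstruction. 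The role of $\nu \neq 0$ is exactly to make the inversion algebraic and global, which is the crux of the argument.
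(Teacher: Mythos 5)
Your proof is correct, but it takes a different route from the paper. You exploit the special structure of the heavy ball update: because the second coordinate of $\Psi_{\mu,\nu,s}$ records $x$, the map is a \emph{global} bijection of $\R^{2p}$ whose inverse $(u,w) \mapsto \bigl(w, \frac{(1+\nu)w - u - \mu v(w,s)}{\nu}\bigr)$ is explicit and semialgebraic as soon as $v(\cdot,s)$ is semialgebraic (this is exactly what $s \in \Sigma$ gives) and $\nu \neq 0$; then dimension monotonicity for semialgebraic maps (valid without any continuity, see Bochnak--Coste--Roy, Prop.\ 2.8.8, or \cite{coste}) finishes the job, since $\Psi_{\mu,\nu,s}^{-1}(Z)$ is the image of $Z$ under the inverse. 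The paper instead argues by contradiction: if the preimage had dimension $2p$, stratification would give an open set $U$ inside it; on the dense open set $R_s \times \R^p$ where $v(\cdot,s) = \nabla_w f(\cdot,s)$ and $f(\cdot,s)$ is $C^2$, the Jacobian of $\Psi_{\mu,\nu,s}$ is invertible for $\nu \neq 0$, so $\Psi_{\mu,\nu,s}$ is a local diffeomorphism there and maps $U \cap (R_s \times \R^p)$ onto a set with nonempty interior contained in $Z$, contradicting $\dim Z \leq 2p-1$. Your argument is more elementary (no second-order information, no stratification, no use of the set $R_s$) and makes transparent that $\nu \neq 0$ is the only thing needed for invertibility; the paper's local-diffeomorphism argument is the one that would survive in settings where no explicit global inverse is available (e.g.\ the plain one-step subgradient update $x \mapsto x - \mu v(x,s)$, which is not globally invertible), which is presumably why it is phrased that way. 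Both proofs establish the same statement, and yours suffices for the downstream use in the avoidance results.
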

\begin{proof}
    Let $Z \subset \R^p$ be semialgebraic, of dimension at most $2p - 1$. Toward a contradiction, suppose $ \dim \Psi_{\mu, \nu, s}^{-1}(Z) = 2p$.  Since $\Psi_{\mu, \nu, s}^{-1}(Z)$ is semialgebraic, then by stratification, there exists an open subset $U$ of $\R^p \times \R^p$ included in $\Psi_{\mu, \nu, s}^{-1}(Z)$. On $R_s \times \R^p$, $\Psi_{\mu, \nu, s}(x,y) = (x - \mu \nabla_w f (x,s) + \nu(x-y), x)$, and its Jacobian is well defined, given by
\begin{equation*}
    \Jac \Psi_{\mu, \nu, s} (x,y) = \begin{pmatrix}
                I_p - \mu \nabla^2_w f(x,s) + \nu I_p &  & - \nu I_p \\
                 \\
                I_p &  & 0_p
    \end{pmatrix}.
\end{equation*}

$ \Jac \Psi_{\mu, \nu, s} (x,y)$ is clearly invertible whenever $\nu \neq 0$. In particular, $\Psi_{\mu, \nu, s}$ is a local diffeomorphism on $R_s \times \R^p$. Since $R_s \times \R^p$ is dense and open, and $U$ is open, $U \cap (R_s \times \R^p)$ has nonempty interior. This implies $\Psi_{\mu, \nu, s}(U \cap (R_s \times \R^p))$ has nonempty interior in $\R^p \times \R^p$, in particular it has dimension $2p$,  but it is included in $Z$ by definition of $U$, which is a contradiction since $\dim Z < 2p$. 
    
\end{proof}

Then, we can deduce the following by induction:

\begin{proposition}[Avoidance of nonsmooth set] \label{prop:measuregenericitygradient}
     Under \Cref{ass:stochasticconservativegradient}, for all $k \in \N^*$, there exists a subset $W_k \subset \R^p \times \R^p$ of full Lebesgue measure such that if $(w_1, w_0) \in W_k$, then for $(s_i)_{i=1, \ldots, k}$ in a set of full measure with respect to $P^k$, it holds that for $i = 1, \ldots, k$, $\nabla_w f(w_i, s_i) = v(w_i,s_i)$, and $\E_{\xi \sim P}[v(w_i, \xi)] = \nabla F(w_i)$. 
     
     In particular, for $(w_1, w_0)$ in a set of full Lebesgue measure $W \subset \R^p \times \R^p$,  $\E_{\xi \sim P}[v(w_k, \xi)] = \nabla F(w_k)$  for all $k \in \N^*$, $P^{\otimes \N}$-almost surely in $(s_k)_{k \in \N}$.
\end{proposition}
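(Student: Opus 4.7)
The strategy is to identify a single Lebesgue-null subset $N \subset \R^p$ capturing both failure modes and to propagate avoidance of $N$ through the scheme via null-preimage arguments followed by Fubini. Let $N' = \{w : \E_{\xi \sim P}[v(w,\xi)] \neq \nabla F(w)\}$ and $N'' = \{w : P(\{s \in \Sigma : w \notin R_s\}) > 0\}$, and set $N = N' \cup N''$. I claim $N$ is Lebesgue null. For $N'$: \Cref{th:integralconservative} gives that $D_F$ is a conservative gradient of $F$, so by \cite[Theorem 1]{bolte2019conservative}, $D_F(w) = \{\nabla F(w)\}$ for Lebesgue-a.e. $w$; since $\E_{\xi \sim P}[v(w,\xi)] \in D_F(w)$ by Aumann's definition, $N'$ is null. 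For $N''$: each slice $R_s^c$ is semialgebraic of dimension at most $p-1$ for $s \in \Sigma$, so $\{(w,s) \in \R^p \times \Sigma : w \notin R_s\}$ is $\lambda^p \otimes P$-null, and Fubini gives $N''$ null. Note that $w \notin N$ implies both equalities of the proposition hold at $w$ for $P$-almost every $s$.

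The second step extracts from the proof of \Cref{lem:definablediffeo} a slightly stronger statement than the dimension bound: for $s \in \Sigma$ and $\nu \neq 0$, the preimage $\Psi_{\mu,\nu,s}^{-1}(M)$ of any Lebesgue-null $M \subset \R^{2p}$ is Lebesgue null. Indeed, decomposing $\Psi_{\mu,\nu,s}^{-1}(M) = \bigl(\Psi_{\mu,\nu,s}^{-1}(M) \cap (R_s \times \R^p)\bigr) \cup \bigl(\Psi_{\mu,\nu,s}^{-1}(M) \cap (R_s^c \times \R^p)\bigr)$, the second piece is null because $R_s^c$ is, and on $R_s \times \R^p$ the map $\Psi_{\mu,\nu,s}$ is a $C^1$ local diffeomorphism with nonzero Jacobian determinant $\pm \nu^p$, so change of variables preserves nullity. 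Since $\nu_j > 0$ for all $j$ by \Cref{ass:stochasticalgorithm}.3 (as $\beta_{j-1} \in (0,1)$), I iterate: for each fixed $(s_1, \ldots, s_{k-1}) \in \Sigma^{k-1}$,
\[\{(w_1, w_0) : w_k \in N\} = (\Psi_{s_{k-1}} \circ \cdots \circ \Psi_{s_1})^{-1}(N \times \R^p)\]
is Lebesgue null.

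A double Fubini argument then concludes. Consider the bad event at step $k$,
\[B_k = \bigcup_{i=1}^{k} \Bigl(\{w_i \in N\} \cup \{w_i \notin R_{s_i}\}\Bigr) \subset \R^{2p} \times \Sigma^k.\]
The previous step combined with Fubini shows that $\{w_i \in N\}$ has zero $\lambda^{2p} \otimes P^{i-1}$-measure; on its complement, $w_i \notin N''$, so the conditional $P$-measure of $\{s_i : w_i \notin R_{s_i}\}$ vanishes. Hence $B_k$ is $\lambda^{2p} \otimes P^k$-null, and a final Fubini produces $W_k \subset \R^{2p}$ of full Lebesgue measure such that $P^k(B_k \mid w_1, w_0) = 0$ on $W_k$, proving the first claim. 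For the last sentence, $W := \bigcap_{k \geq 1} W_k$ has full Lebesgue measure by countable intersection, and for $(w_1, w_0) \in W$ each $B_k$ has zero $P^{\otimes \N}$-measure, hence so does their countable union. The main technical point will be the second paragraph: propagating the null-preimage property through compositions of $\Psi_{s_j}$ with varying $s_j$ requires careful use of the local $C^1$ diffeomorphism structure jointly in the $s_j$ and of joint measurability to justify the subsequent Fubini applications.
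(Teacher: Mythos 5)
Your proof is correct (at the same level of measure-theoretic rigor as the paper's own argument, which likewise glosses the joint measurability in $(w,s)$ of the sets built from $R_s$), but it takes a genuinely different route in two linked places. Where the paper propagates only the semialgebraic sets $R_{s_{i+1}}\times R_{s_i}$ backwards through the iterates and controls the preimages via the dimension bound of \Cref{lem:definablediffeo} (a stratification/contradiction argument), you propagate a merely measurable null set $N$ and accordingly replace that lemma by the measure-theoretic statement that $\Psi_{\mu,\nu,s}^{-1}$ preserves Lebesgue-null sets; your proof of this (null complement of $R_s\times\R^p$, plus the local $C^1$ diffeomorphism with Jacobian determinant $\pm\nu^p\neq 0$, with $\nu_k>0$ explicitly checked from \Cref{ass:stochasticalgorithm}, which the paper leaves implicit) is valid and more elementary. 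Second, you obtain $\E_{\xi\sim P}[v(w_i,\xi)]=\nabla F(w_i)$ by placing into $N$ the null set where $D_F\neq\{\nabla F\}$, combining \Cref{th:integralconservative} with the almost-everywhere equality of a conservative gradient and the gradient, instead of the paper's dominated-convergence interchange of gradient and expectation performed after Fubini; this is arguably cleaner, as it sidesteps differentiation under the integral sign. The trade-off is that by abandoning semialgebraicity of the propagated bad set you lose the dimension information: your argument proves the proposition as stated (full Lebesgue measure), but would not directly yield \Cref{cor:definableLebesguegenericity}, i.e.\ the refinement that $W_k^c$ and $W^c$ are (countable) unions of manifolds of dimension at most $2p-1$ used in the last claim of \Cref{th:convergenceclarke}, whereas the paper's dimension-based lemma serves both purposes. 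Your bad-event bookkeeping with $B_k$ and the two Fubini applications is essentially the paper's $V_i$, $Z_k$ construction in different clothing.
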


\begin{proof} We first show  that for $k \in \N^*$, for $P^k$-almost all $(s_i)_{i=1, \ldots, k}$, there exists a set $Z_k \subset \R^p \times \R^p$ of full measure such that for all $(w_1, w_0) \in Z_k$, $\nabla_w f(w_i, s_i) = v(w_i,s_i)$  for $i=0, \ldots, k$. We then deduce the desired result by Fubini's theorem.

For each $k \in \N^*$, assume $s_k \in \Sigma$. Fix $k \in \N^*$. For $i = 1, \ldots, k$, we let $V_i = (\Psi_{s_i} \circ \Psi_{s_{i-1}} \circ \ldots \circ \Psi_{s_1} )^{-1}(R_{s_{i+1}} \times R_{s_{i}})$. By construction, if $(w_1, w_0) \in V_i$, then $(w_{i+1}, w_i) \in R_{s_{i+1}} \times R_{s_i}$. Consequently, by definition of $R_s$ for $s \in \Sigma$, if $(w_1, w_0) \in Z_k := \bigcap_{i=1}^k V_k$ then for all $i = 1, \ldots k$, $\nabla_w f(w_i, s_i) = v(w_i,s_i)$. 

It remains to verify for all $k \in \N^*$, $V_k$ is dense semialgebraic, or $V_k^c$ has dimension at most $2p - 1$. This is done by induction with \Cref{lem:definablediffeo}.

Fix $k \in \N^*$, $R_{s_{k+1}} $ and $R_{s_k}$ are semialgebraic and dense, hence $ \dim (R_{s_{k+1}} \times R_{s_k})^c \leq 2p - 1$ by stratification. Since  $s_k \in \Sigma$, then by \Cref{lem:definablediffeo} we have that $\Psi_{s_k}^{-1}((R_{s_{k+1}} \times R_{s_k})^c)$ has dimension at most $2p - 1$. Applying recursively \Cref{lem:definablediffeo} for the function $\Psi_{s_i}$  and the set $(\Psi_{s_{k}} \circ \ldots \circ \Psi_{s_{i+1}})^{-1}((R_{s_{k+1}} \times R_{s_k})^c)$ for $i = k - 1$ to $1$, proves that $ V_k^c = (\Psi_{s_k} \circ \Psi_{s_{k-1}} \circ \ldots \circ \Psi_{s_1} )^{-1}((R_{s_{k+1}} \times R_{s_{k}})^c)$ has dimension at most $2 p  - 1$. Finally, the intersection $Z_k = \bigcap_{i=1}^k V_k$ is dense semialgebraic.

For any $k \in \N^*$, we proved that for $P^k$-almost all $(s_i)_{i=1, \ldots, k}$ in $\Sigma^k$, there exists  $Z_k$ of full measure such that if $(w_1, w_0) \in Z_k$, then  $\nabla_w f(w_i, s_i) = v(w_i,s_i)$  for $i=1, \ldots, k$. By Fubini's theorem on the product $\Sigma^k \times (\R^p \times \R^p)$, this implies that there exists a set of full Lebesgue measure $W_k \subset \R^p \times \R^p$, such that if $(w_1, w_0) \in W_k$, then for $P^k$-almost all $(s_i)_{i=1, \ldots, k}$ in $\Sigma^k$, $\nabla_w f(w_i, s_i) = v(w_i,s_i)$  for $i=1, \ldots, k$. By \Cref{ass:stochasticconservativegradient}.3, we have $\|\nabla_w f(w_i, s_i)\| \leq \kappa(s_i) \psi(\|w_i\|)$ for almost all $s_i$. By dominated convergence theorem, we can interchange integral with respect to $s_i$  and gradient with respect to $w_i$, to write $\nabla F(w_i) = \E_{\xi \sim P}[\nabla_w f (w_i, \xi)] = \E_{\xi \sim P}[v(w_i, \xi)]$.

Finally, set $W := \bigcap_{k \in \N^*} W_k$. By definition of the subsets $W_k$, for $k \in \N^*$, if $(w_1, w_0) \in W$, then $\E_{\xi \sim P}[v(w_k, \xi)] = \nabla F(w_k)$ for all $k \in \N^*$.

\end{proof}

Under a further assumption on the distribution $P$, the subset $W$ in \Cref{prop:measuregenericitygradient}, not only has full Lebesgue measure but is also \emph{residual}, meaning that its complement is a countable union of low dimensional manifolds.

\begin{corollary} \label{cor:definableLebesguegenericity} Assume $S = \R^m$, $P$ has a density with respect to Lebesgue and $f$ and $v$ are jointly semialgebraic. Then \Cref{prop:measuregenericitygradient} holds with the additional properties: $W_k^c$ is a finite union of manifolds with dimension at most $2p-1$ and $W^c$ is a countable union of manifolds with dimension at most $2p-1$.
\end{corollary}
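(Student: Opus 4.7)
The plan is to strengthen \Cref{prop:measuregenericitygradient} by exhibiting each $W_k^c$ as a \emph{semialgebraic} subset of $\R^{2p}$. Combined with the Lebesgue-negligibility already provided by \Cref{prop:measuregenericitygradient}, this will force $\dim W_k^c \leq 2p-1$, and \Cref{th:stratification} will then decompose $W_k^c$ into a finite union of $C^r$ manifolds of dimension at most $2p-1$. The statement for $W=\bigcap_{k \in \N^*} W_k$ will then follow from $W^c = \bigcup_k W_k^c$ being a countable union.

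Under the joint semialgebraicity of $f$ and $v$, each map $\Psi_{\mu,\nu,s}:(x,y)\mapsto(x-\mu v(x,s)+\nu(x-y),x)$ is jointly semialgebraic in $((x,y),s)$, so iterated compositions remain jointly semialgebraic in $((w_1,w_0), s_1, \ldots, s_k)$. The ``good set'' $R^\ast := \{(w,s) : v(w,s) = \nabla_w f(w,s) \text{ and } f(\cdot,s) \text{ is } C^2 \text{ on a neighborhood of } w\}$ is itself semialgebraic (the differentiability and $C^2$ conditions on a jointly semialgebraic function are first-order expressible over the ordered field of reals), so the ``failure set''
$$B_k := \{((w_1, w_0), s_1, \ldots, s_k) \in \R^{2p} \times \R^{km} : \exists\, i \in \{1, \ldots, k\},\ (w_i, s_i) \notin R^\ast\}$$
is semialgebraic, where $w_i$ is the semialgebraic expression obtained from iterated composition of the $\Psi_{s_j}$. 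Next, I would invoke the hypothesis that $P$ has a density with respect to Lebesgue: then $P^k$ also has a density with respect to Lebesgue on $\R^{km}$, so any semialgebraic subset of $\R^{km}$ is $P^k$-null if and only if it is Lebesgue-null if and only if it has dimension strictly less than $km$. Consequently
$$W_k^c = \{(w_1, w_0) \in \R^{2p} : \dim B_k((w_1, w_0)) = km\},$$
where $B_k((w_1,w_0))$ is the fiber. By the classical result that fiber dimension is a definable function on semialgebraic sets (see e.g.\ \cite{coste}), $W_k^c$ is itself semialgebraic.

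The main obstacle is precisely this semialgebraic description of $W_k^c$; once in place, the rest is routine: \Cref{prop:measuregenericitygradient} supplies the Lebesgue-negligibility of $W_k^c$, which for a semialgebraic subset of $\R^{2p}$ forces $\dim W_k^c \leq 2p-1$, and \Cref{th:stratification} then yields a finite $C^r$-stratification of $W_k^c$ into manifolds of dimension at most $2p-1$. The passage to $W=\bigcap_k W_k$ is a trivial countable union. Two technical points deserve care in executing step two: (i) checking that the $C^2$-differentiability locus of a jointly semialgebraic function is semialgebraic, which ensures $R^\ast$ and hence $B_k$ are semialgebraic; and (ii) applying the fiber-dimension theorem correctly so that the semialgebraic condition ``$\dim B_k((w_1,w_0)) = km$'' indeed carves out a semialgebraic subset of $\R^{2p}$.
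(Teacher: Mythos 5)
Most of your semialgebraic bookkeeping is sound: under joint semialgebraicity the set $R^\ast$, the iterate maps, and hence $B_k$ are semialgebraic, the fiber-dimension theorem does make $\{(w_1,w_0) : \dim B_k((w_1,w_0)) = km\}$ semialgebraic, and ``semialgebraic $+$ Lebesgue-null $\Rightarrow$ dimension $\le 2p-1$'' together with \Cref{th:stratification} would finish the proof. The genuine gap is the measure-theoretic pivot. Since $P$ is only assumed to \emph{have} a density, the equivalence ``$P^k$-null $\iff$ Lebesgue-null'' for semialgebraic sets is false: only ``Lebesgue-null $\Rightarrow$ $P^k$-null'' holds, because the density may vanish on a full-dimensional semialgebraic set. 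Hence \Cref{prop:measuregenericitygradient}, which says that for a.e.\ $(w_1,w_0)$ the bad fiber is $P^k$-null, does not give Lebesgue-negligibility of your set $\{(w_1,w_0) : \dim B_k((w_1,w_0)) = km\}$ (it is legitimate to re-choose $W_k$, but you must then re-establish its full measure, and that is exactly what fails). This is not a cosmetic issue: \Cref{ass:stochasticconservativegradient} constrains $v(\cdot,s)$ only for $P$-almost every $s$, so off the support of $P$ the oracle may disagree with $\nabla_w f(\cdot,s)$ everywhere. For instance, take $m=1$, $P$ uniform on $[0,1]$, $f(w,s)=\tfrac12\|w\|^2$ for all $s$, $v(w,s)=w$ for $s\in[0,1]$ and $v(w,s)=w+u$ (with $u\neq 0$ fixed) otherwise: all assumptions and the corollary's conclusion hold (with $W_k=\R^p\times\R^p$), yet every fiber $B_k((w_1,w_0))$ contains $\{(s_1,\dots,s_k): s_1\notin[0,1]\}$ and so has dimension $km$; your $W_k$ is empty.

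The paper's proof never measures the bad fibers with Lebesgue measure. It introduces the jointly semialgebraic set $L_k$ of tuples $(w_1,w_0,s_1,\dots,s_k)$ along which $v(w_i,s_i)=\nabla_w f(w_i,s_i)$ for all $i\le k$, and uses the dense-product lemma (\Cref{prop:denseproduct}) rather than fiber dimension: from the ``for Lebesgue-a.e.\ $(w_1,w_0)$, for $P^k$-a.e.\ $(s_1,\dots,s_k)$'' statement of \Cref{prop:measuregenericitygradient} it extracts a dense semialgebraic set $W_k$ of initializations whose $s$-slices are good up to $P^k$-null sets, then applies dominated convergence to get $\nabla F(w_i)=\E_{\xi\sim P}[v(w_i,\xi)]$ and stratification to bound $\dim W_k^c$ by $2p-1$; the countable-union statement for $W^c$ is then as in your last step. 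If you want to retain your fiber-dimension formulation, you would need an extra hypothesis (e.g.\ an almost everywhere positive density, so that $P^k$-null and Lebesgue-null semialgebraic sets coincide), which the corollary does not assume.
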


\begin{proof}
    For $k \in \N^*$, consider the set 
    \begin{align*}
        L_k := \{ (w_1, w_0, s_1, \ldots, s_k) \ : \ & \forall i \in \{1, \ldots, k\}, v(w_i, s_i) = \nabla_w f(w_i, s_i)\}.
    \end{align*}
    
   By stability properties of semialgebraic sets, $L_k$ is semialgebraic. By \Cref{prop:measuregenericitygradient}, for Lebesgue almost all $(w_1,w_0)$ and for almost   all $(s_1, \ldots, s_k)$, $ v(w_i, s_i) = \nabla_w f(w_i, s_i)$ for $i = 1, \ldots, k$. Applying \Cref{prop:denseproduct}, $L_k$ is dense. Also by \Cref{prop:denseproduct}, there exists a semialgebraic set $W_k$ open and dense such that if $(w_1, w_0) \in W_k$, then it holds that $\forall i \in \{1, \ldots, k\}, v(w_i, s_i) = \nabla_w f(w_i, s_i)$ for almost all $s_i$. Under \Cref{ass:stochasticconservativegradient}.3, we then may apply the dominated convergence theorem as in the proof of \Cref{prop:measuregenericitygradient} to obtain $\nabla F(w_i) = \E_{\xi \sim P}[v(w_i, \xi)]$.

   By stratification, $W_k^c$ is a finite union of manifolds with dimension at most $2p-1$, and the complementary of $W = \bigcap_{k \in \N} W_k $ is a countable union of manifolds with dimension at most $2p-1$. 
\end{proof}

We can now deduce \Cref{th:convergenceclarke}:

\begin{proof}[of \Cref{th:convergenceclarke}]
We are given $D$ such that $D(\cdot, s)$ is a conservative gradient for $f(\cdot, s)$. 
    
    Now let $W$ be given by \Cref{prop:measuregenericitygradient}. Assume $(w_1, w_0) \in W$, then by definition of $W$, we have $P^{\otimes \N}$-almost surely, for all $k \in \N$,
    \begin{equation}
        \begin{aligned}
           & w_{k+1} = w_k - \alpha_k y_k \\
           & y_{k+1} = \beta_k (\nabla F(w_{k+1}) + u_{k+1})  + (1 - \beta_k) y_k,\\
        \end{aligned}
    \end{equation}
with $u_{k+1} = v(w_{k+1}, \xi_{k+1}) - \E_{\xi \sim P}[v(w_{k+1}, \xi)]$. In particular, $(y_k)_{k \in \N}$  satisfies the relation \eqref{eq:update_y_forproof} with $V(w_{k+1}) \in \partial^c F(w_{k+1})$.

Finally, as mentioned in \Cref{remark:replacebyclarke} we may follow the proof of \Cref{th:convergenceconservative} from \Cref{section:nonsmoothanalysis}  with $\partial^c F$ in place of $D_F$ to obtain
\begin{enumerate}
    \item Every essential accumulation point $(w^*, y^*)$ satisfies $0 \in \partial^c F(w^*)$ and $y^* = 0$.
    \item Every accumulation point $(w^*, y^*)$ such that $\underset{k \to \infty}{\liminf} F(w_k) + \frac{r}{2} \|y_k\|^2 = F(w^*) + \frac{r}{2} \|y^*\|^2$ satisfies $0 \in \partial^c F(w^*)$ and $y^* = 0$.
    \item If \Cref{ass:sard} also holds, then 1 holds for all accumulation points, and $F(w_k)$ converges as $k \to \infty$.
\end{enumerate}
The last statement of the theorem is simply a consequence of \Cref{cor:definableLebesguegenericity}. 

\end{proof}

\section{Conclusion}
We analyzed a stochastic heavy ball method in the nonsmooth and nonconvex setting using the differential inclusion approach.  An interchange rule for conservative gradient was used in order to justify subgradient sampling and to derive first results on the convergence to critical points with respect to the expected conservative gradient. Thanks to the Sard condition obtained through semialgebraic assumptions on the sample distribution, we obtained the convergence of the objective function. This first part of our analysis suggested that subgradient sampling and other calculus rules could impact the convergence, possibly leading to artificial critical points. We showed that it doesn't happen in general as artifacts coming from the conservative calculus are almost surely avoided for randomized initializations, hence retrieving a standard convergence to Clarke critical points.

\section*{Acknowledgements}
The author would like to thank Jérôme Bolte and Edouard Pauwels for their precious feedback. This work has benefitted from the AI Interdisciplinary Institute ANITI. ANITI is funded by the French ``Investing for the Future – PIA3" program under the Grant agreement n°ANR-19-PI3A-0004.

\bibliographystyle{spmpsci}
\bibliography{references}
\end{document}